\documentclass[a4paper,11pt]{amsart}

\usepackage{amsmath,amssymb}
\usepackage{amsfonts}
\usepackage{graphicx}
\usepackage{algorithmic}
\usepackage{mathtools}
\usepackage[top=3.5cm, bottom=3.5cm, left=3cm, right=3cm]{geometry}
\usepackage{enumerate}
\usepackage{siunitx}
\sisetup{exponent-product = \cdot,round-mode=figures,round-precision=4,scientific-notation=false}
\usepackage{subcaption}
\usepackage{pgfplotstable}
\pgfplotsset{
compat = newest,
tick label style = {font = \tiny},
legend style = {font = \tiny},
xlabel style={yshift=+0.5ex},
ylabel style={yshift=-1.0ex}
}
\usepackage{hyperref}
\usepackage[capitalise]{cleveref}

\newtheorem{lemma}{Lemma}[section]
\newtheorem{proposition}{Proposition}[section]

\newtheorem{remark}{Remark}[section]
\newtheorem{algorithm}{Algorithm}[section]

\title[Approximation of harmonic map flows]{Projection-free approximation of flows \\
of harmonic maps with quadratic constraint accuracy
and variable step sizes}
\author[G. Akrivis]{Georgios Akrivis}
\address{Department of Computer Science and Engineering,
University of Ioannina, 451\,10 Ioannina, Greece
\& Institute of Applied and Computational Mathematics, FORTH, 700\,13 Heraklion, Greece}
\email{akrivis@cse.uoi.gr}
\author[S. Bartels]{S\"oren Bartels}
\address{Department of Applied Mathematics, University of Freiburg, 79104 Freiburg i.~Br., Germany}
\email{bartels@mathematik.uni-freiburg.de}
\author[M. Ruggeri]{Michele Ruggeri}
\address{Department of Mathematics, University of Bologna, 40126 Bologna, Italy}
\email{m.ruggeri@unibo.it}
\author[J. Wang]{Jilu Wang}
\address{School of Science, Harbin Institute of Technology, Shenzhen 518055, China}
\email{wangjilu@hit.edu.cn}

\date{\today}

\def\hu{\widehat{u}}
\def\veps{\varepsilon}
\def\DD{{\rm D}}

\def\uni{{\rm uni}}
\def\ener{\text{ener}}
\def\stop{\text{stop}}
\def\d{{\mathrm d}}
\def\R{{\mathbb R}}

\begin{document}

\begin{abstract}
We construct and analyze a projection-free linearly implicit method for the approximation
of flows of harmonic maps into spheres. The proposed method is unconditionally energy stable and,
under a sharp discrete regularity condition, achieves second-order accuracy with respect to the constraint violation. 
Furthermore, the method accommodates variable step sizes
to speed up the convergence to stationary points and
to improve the accuracy of the numerical solutions near singularities,
without affecting the unconditional energy stability and the constraint violation property. 
We illustrate the accuracy in approximating the unit-length constraint and the performance of the 
method through a series of numerical experiments, and compare it with the linearly
implicit Euler and two-step BDF methods.
\end{abstract}

\keywords{harmonic map, gradient flow, midpoint method, unit-length constraint}

\subjclass[2020]{35J50, 35J62, 35K59, 65M15, 65M60}

\thanks{\emph{Acknowledgments.}
The first author gratefully acknowledges the support and warm hospitality 
provided by the Harbin Institute of Technology, Shenzhen, China.
The second author gratefully acknowledges
financial support by the German Research Foundation (DFG)
through the Research Unit FOR 3013 \emph{Vector- and tensor-valued surface PDEs}
(Project No.\ 417223351).
The third author is a member of the `Gruppo Nazionale per il Calcolo Scientifico' (GNCS) of the Italian `Istituto Nazionale di Alta
Matematica' (INdAM), and was partially supported by GNCS (CUP E53C23001670001) and by the European Cooperation in Science and Technology (COST)
through the COST Action POLYTOPO (CA23134).
The work of the fourth author was partially supported by the Shenzhen Science and Technology Program (Grant No.\ RCJC20231211090005019)
and the National Natural Science Foundation of China (Grant No. 12471371).
}

\maketitle

\section{Introduction}

Let $\varOmega\subset \R^d$ denote a bounded domain with
Lipschitz boundary $\partial\varOmega,$ and let 
$\varGamma_\DD \subset \partial\varOmega$ be the Dirichlet part
of the boundary, of positive surface measure. 
Harmonic maps (into the sphere) are the stationary points of the Dirichlet energy functional
\[I[u]=\frac12\int_\varOmega |\nabla u|^2\, \d x,\]
among all vector fields $u \in H^1(\varOmega; \R^\ell)$
satisfying the unit-length constraint $|u|=1$ a.e.\ in $\varOmega$,
subject to Dirichlet boundary conditions $u|_{\varGamma_\DD}=u_\DD$. 
Here, $H^1(\varOmega; \R^\ell)$ denotes
the Sobolev space 
consisting of vector fields $u:\varOmega\to \R^\ell$ in $L^2(\varOmega;\R^\ell)$
with square-integrable gradients,
while $u_\DD: \varGamma_\DD \to \R^{\ell}$ is a given function,
which is assumed to be equal to the trace of a function $\tilde u_\DD \in H^1(\varOmega; \R^\ell)$ with
$|\tilde u_\DD|=1$ a.e.\ in $\varOmega$.
The resulting Euler--Lagrange equations are
\begin{equation}\label{EL}
\begin{alignedat}{2}
&-\varDelta u = \lambda u, \quad 
&&|u| = 1 \ \text{ in } \varOmega, \\
&u = u_\DD \ \text{ on } \varGamma_\DD, \quad
&&\frac {\partial u}{\partial n} = 0 \ \text{ on } \partial\varOmega \setminus \varGamma_\DD.
\end{alignedat}
\end{equation}
The function $\lambda = |\nabla u|^2$ is the 
Lagrange multiplier related to the unit-length constraint;
see \cite[Ch.\ 7]{Bartels-book} and references therein. 

Gradient flows provide an attractive tool to solve the Euler--Lagrange equations as these decrease the Dirichlet 
energy along trajectories.  The simplest case corresponds to the $L^2$-gradient flow and it is known as the harmonic 
map heat flow (into spheres)
\begin{equation}\label{eq:heat flow}
\partial_tu-\varDelta u=|\nabla u|^2u,\quad |u|^2=1  \ \text{ in } \varOmega,  
\end{equation}
subject to initial and boundary conditions $u(0,\cdot)=u^0\in  {\mathcal A}_{u_\DD}$ and $u(t,\cdot)|_{\varGamma_\DD}=u_\DD$, 
where the class of  admissible vector fields ${\mathcal A}_{u_\DD}$ is defined by 
${\mathcal A}_{u_\DD}:=\{u\in H^1(\varOmega;  {\mathbb S}^{\ell-1}):  
\ u|_{\varGamma_\DD}=u_\DD\}$
and ${\mathbb S}^{\ell-1}:=\{v\in \R^\ell: |v|=1\}$ denotes  the unit sphere in $\R^\ell$. 
Under our assumptions, $\tilde u_\DD\in {\mathcal A}_{u_\DD};$ in particular, ${\mathcal A}_{u_\DD}\neq \emptyset.$

Equation \eqref{eq:heat flow} and various generalizations appear in numerous applications, including the Landau--Lifshitz--Gilbert (LLG) equation 
for magnetization dynamics \cite{AS}, models of nematic liquid crystals \cite{BFP}, and geometric evolution 
equations describing mean curvature flow of surfaces \cite{KLL1,KLL2}. A common feature in these applications
is that the exact solution inherently satisfies a unit-length constraint. 
This intrinsic requirement has inspired the development of efficient numerical methods that preserve 
the unit-length property at the discrete level. 

One straightforward approach involves the renormalization of numerical solutions using post-processing 
techniques to restore the crucial unit-length attribute to the solutions. 
Numerical methods that incorporate a projection step to satisfy the constraint at certain nodes exactly
lead to restrictions concerning the step size or the class of admissible triangulations; see \cite{Bartels-2005,BBFP,Al2,AKST}. 
Newton schemes can in general only be guaranteed to converge locally in the neighborhood of a sufficiently regular energy-stable solution~\cite{BPW}.
Methods that satisfy the constraint everywhere make use of appropriate nonlinear interpolation procedures and are known
to be optimally convergent for certain sufficiently regular solutions; see \cite{GHS}. The optimal convergence of piecewise 
affine finite element discretizations with nodal constraints has been established in \cite{HTW,BPW}.
An alternative approach to develop higher-order schemes for the
harmonic map heat flow and the LLG equation can be based on equivalent
unconstrained formulations; cf.~\cite{BaLuPr}. This however requires
the solution of nonlinear systems of equations in each time step.

The standard variational formulation of the Euler--Lagrange equations \eqref{EL}
is to seek a vector field $u\in {\mathcal A}_{u_\DD}$ such that
\begin{equation}\label{eq:varEL1}
(\nabla u, \nabla v)=\int_{\varOmega} |\nabla u|^2u\cdot v \, \d x 
 \end{equation}
for all $v\in H_\DD^1(\varOmega; \R^\ell)\cap L^\infty(\varOmega; \R^\ell)$; 
the test space in \eqref{eq:varEL1} is dictated by the fact that $|\nabla u|^2u\in L^1(\varOmega; \R^\ell)$.
Here, $H^1_\DD(\varOmega;\R^\ell)$ denotes the subspace of $H^1(\varOmega;\R^\ell)$
made of all functions with vanishing traces on $\varGamma_\DD$.
An alternative formulation, due to Alouges \cite{Al}, is
\begin{equation}\label{eq:varEL2}
(\nabla u, \nabla v)=0  
 \end{equation}
for all $v\in T_u$, where
\[T_u:=\{v\in H_\DD^1(\varOmega; \R^\ell):  v\cdot u =0 \text{ a.e.\ in } \varOmega\}\]
represents a solution-dependent test space, a closed subspace of $H_\DD^1(\varOmega; \R^\ell)$; see, e.g., \cite[Ch.~7]{Bartels-book}
for further details. The alternative form  \eqref{eq:varEL2} appears much simpler. 
Even though the test space  $T_u$ depends on the solution $u$, the primary advantage of~\eqref{eq:varEL2}
is that it can be easily linearized; see~\cite{Al}.
Based on formulation \eqref{eq:varEL2}, several projection-free linear schemes are proposed in \cite{Bartels-2016}; 
the resulting constraint violation is controlled linearly by the step size, independently of the number of iterations.
The approach has then been used for various related problems in~\cite{AHPPRS,HPPRSS,NRY}.

We now describe the main idea of the projection-free linearly implicit Euler scheme.
In \cite{Bartels-2016}, semi-implicit time discretization schemes are introduced for gradient flow problems represented by 
\begin{equation}\label{eq:grad-flow}
(u_t,v)_\star + (\nabla u,\nabla v) = 0,
 \end{equation}
subject to initial and boundary conditions $u(\cdot,0)=u^0\in {\mathcal A}_{u_\DD}$ and $u(\cdot,t)|_{\varGamma_\DD}=u_\DD$. 
The variational formulation~\eqref{eq:grad-flow} holds for a.e.\ $t \in [0,T]$ for some final time $T>0$,
and for all test functions $v\in T_{u(t)}$.
The pointwise unit-length constraint is imposed in the equivalent form $u_t\cdot u = 0$.
The inner product $(\cdot,\cdot)_\star$ can be either the $L^2$- or the $H^1$-inner product,
denoted by $(\cdot,\cdot)$ and $(\nabla \cdot,\nabla \cdot)$, respectively. 
Equation~\eqref{eq:grad-flow} then represents either the $L^2$- or the $H^1$-gradient
flow for harmonic maps into spheres, respectively.

Starting from $u^0$, the projection-free linearly implicit Euler 
method iteratively computes a sequence $(u^n)_{n \geqslant 1}$ 
of approximations as follows:
For a fixed step size $\tau,$ let $d_t u^n := (u^n-u^{n-1})/\tau$ denote 
the backward difference quotient.
At each iteration, one seeks $d_t u^n\in H_\DD^1(\varOmega; \R^\ell)$, 
satisfying the discrete linearized unit-length condition $d_t u^n \cdot u^{n-1} = 0$, such that 
\begin{equation}\label{eq:iEa}
(d_t u^n,v)_\star + (\nabla u^{n-1}+\tau \nabla d_t u^n,\nabla v) = 0
 \end{equation}
for all $v\in H_\DD^1(\varOmega; \R^\ell)$ with $v\cdot u^{n-1}=0.$ 
The new approximation is then given by 
$u^n:=u^{n-1}+\tau  d_t u^n$
and satisfies the required Dirichlet boundary condition $u^n|_{\varGamma_\DD}=u_\DD.$
Note that \eqref{eq:iEa} can be reformulated as 
\begin{equation}\label{eq:iEb}
(d_t u^n,v)_\star + (\nabla u^n,\nabla v) = 0
 \end{equation}
for all $v\in H_\DD^1(\varOmega; \R^\ell) $ with $ v\cdot u^{n-1}=0.$ 
The iteration is unconditionally well posed and energy decreasing; 
indeed, choosing $v = \tau \, d_t u^n$ in \eqref{eq:iEb} yields 
\begin{equation}\label{eq:iE-energ-stab}
\frac12 \|\nabla u^n\|^2 - \frac12 \|\nabla u^{n-1}\|^2 + \tau \|d_t u^n\|_\star^2 + \frac{\tau^2}{2} \|\nabla d_t u^n\|^2 = 0,
 \end{equation}
which implies, for every $m \geqslant 1$, the energy identity
\begin{equation}\label{eq:iE-energ-stab2}
\frac12 \|\nabla u^m\|^2
+ \tau \sum_{n=1}^m \|d_t u^n\|_\star^2
+ \frac{\tau^2}{2} \sum_{n=1}^m \|\nabla d_t u^n\|^2
= \frac12 \|\nabla u^0\|^2.
 \end{equation}
This yields the summability of the discrete time derivatives $\|d_t u^n\|_\star^2$ and
hence the weak convergence of subsequences to solutions of~\eqref{eq:varEL2}. 
A bound for the constraint violation thus follows from the orthogonality
condition and $|u^0| =1$, as we have
\begin{equation}\label{eq:iE-constr-viol} 
|u^m|^2 -1 = |u^{m-1}|^2 -1 + \tau^2 |d_t u^m|^2 = \dotsb
= \tau^2 \sum_{n=1}^m |d_t u^n|^2.
\end{equation}
By taking the $L^1$-norm of this identity, the sum on the right-hand side is bounded by 
$\tau (c_\star/2) \|\nabla u^0\|^2$ provided that the induced norm $\|\cdot \|_\star$ controls 
the $L^2$-norm up to a factor $c_\star^{1/2}$ (see~\eqref{eq:metric_equivalence} below).

The iterative scheme \eqref{eq:iEa} is built upon the weak formulation \eqref{eq:grad-flow} by Alouges 
and an implicit Euler temporal discretization. 
Recently, linearly implicit backward difference formula (BDF) schemes were introduced in \cite{AFKL} 
for the LLG equation.  For the harmonic map heat flow, a numerical approximation 
employing a nodal treatment of the unit-length constraint was proposed in \cite{BKW}. 
More recently, An, Gao, and Sun \cite{AGS} designed semi-implicit Euler and Crank--Nicolson 
finite difference projection methods for the LLG equation.
A similar semi-implicit approach, but based on the two-step BDF method, was proposed in~\cite{CWX}.
These works have yielded optimal/quasi-optimal error estimates, leading to 
bounds on the constraint violation, in situations involving sufficiently regular solutions. 

In this paper, we design a projection-free linearly implicit $(\theta,\mu)$-method for flows of harmonic maps, 
utilizing Alouges' weak formulation, and address its approximation properties when the smoothness of the solution is not guaranteed. 
Here, $0 < \theta \leqslant 1$ and $0 \leqslant \mu \leqslant 1$ denote two parameters of the method
characterizing its energy dissipation and its approach to realize the unit-length constraint,
respectively; see \cref{SSe:2.3} below.
The proposed method offers several key advantages: 
(1) It is unconditionally energy stable and achieves second-order accuracy in approximating the unit-length constraint, 
requiring only minimal regularity conditions on the solution, akin to the projection-free two-step BDF method proposed 
in \cite{ABP}; see \cref{pro:energy,prp:Contraint violation2} below.
(2) It accommodates variable step sizes, which paves the way to the application of acceleration techniques
for achieving faster convergence to stationary points and adaptive approaches
to improve the accuracy of numerical solutions near singularities
(such as a singularity at $t = 0$ for nonsmooth initial data or a finite blow-up for smooth initial data),
without affecting the unconditional energy stability and the constraint violation property.
This capability represents a notable advantage over the projection-free two-step BDF
method proposed in \cite{ABP}, as stability for BDF methods in the case of variable step sizes is a delicate matter; see, e.g., \cite{ACHYZ}.
We illustrate the accuracy in approximating the unit-length constraint and the 
performance of the proposed method through a series of numerical experiments,
comparing it with the linearly implicit Euler and two-step BDF methods.

To fix the ideas, we develop our theory for the problem of approximating flows of harmonic maps (into the sphere),
which serves as a prototype of a geometrically constrained partial differential equation.
Using the same ideas, projection-free linearly implicit iterative schemes
with second-order accuracy in the constraint approximation
supporting variable step sizes can be designed and analyzed for a broad class of problems,
e.g., in micromagnetics, liquid crystal theory, and bending theory.
We stress that our results are developed for a semi-discrete method,
in which only the time discretization of~\eqref{eq:grad-flow} is considered,
but hold verbatim if a spatial discretization with a nodal treatment of the constraint is used;
see~\cite{Bartels-2016},
where a fully discrete method based on first-order finite elements
for the linearly implicit Euler method is introduced and analyzed.
Moreover,
the method can be extended to more general target manifolds than the unit sphere,
e.g., to manifolds that can be characterized as the zero level set of a $C^2$-function
satisfying certain growth conditions.
Again, we refer to~\cite{Bartels-2016},
where this extension is developed for the method based on the linearly implicit Euler method.

The paper is organized as follows. 
In \cref{Se:2}, we introduce the projection-free linearly implicit $(\theta,\mu)$-method and the main theoretical results. 
For the sake of comparison, we also discuss the implicit Euler method and the two-step BDF method. 
The extension of the proposed method to accommodate variable step sizes and the corresponding analyses 
of energy decay and constraint violation are presented in \cref{Se:3}. 
Numerical results are provided in \cref{Se:num} to support the theoretical analysis and illustrate the performance 
of the schemes. 

In this work, we use standard notation for differential operators and Lebesgue and Sobolev spaces. 
We let $|\cdot|$ denote the Euclidean length of vectors as well as the Frobenius norm of matrices, 
and $\|\cdot\|$ the $L^2$-norm of functions or vector fields.

\section{Projection-free methods and main results}\label{Se:2}
 
In this section, we introduce a projection-free linearly implicit method for
the time discretization of flows of harmonic maps, 
which is proved to be of quadratic constraint accuracy.
To evaluate and compare the effectiveness of the proposed scheme, we also discuss 
the projection-free two-step BDF method recently introduced and analyzed in \cite{ABP}. 

\subsection{The linearly implicit $\boldsymbol{(\theta,\mu)}$-method}\label{SSe:2.3}

The implicit Euler method~\eqref{eq:iEa} proposed in \cite{Bartels-2016} is known as the simplest projection-free method for harmonic maps, 
and exhibits only first-order convergence
with respect to the step size of the error in the unit-length constraint.
In this subsection, we generalize the approach to obtain a linearly implicit $(\theta,\mu)$-method.

Let $t_n=n\tau$ for all $n \geqslant 0$, where $\tau>0$ denotes a fixed step size.
For $n \geqslant 1$ and a sequence $(u^n)_{n \geqslant 0}$ in a Hilbert space, we define 
\begin{equation*}
d_tu^n
:=
\frac{u^n-u^{n-1}}{\tau},
\quad
u^{n-1+\theta}
:=
u^{n-1}+\theta\tau d_tu^{n},
\quad
\widehat u^{n-1+\mu}
:=
u^{n-1}+\mu\tau d_tu^{n-1},
\end{equation*}
where $0\leqslant \theta,\mu\leqslant 1$.
Here,
$u^{n-1+\theta}$ denotes the interpolated value at $t_{n-1+\theta}=t_{n-1}+\theta\tau$ 
of the linear interpolant based on $(t_{n-1},u^{n-1})$ and $(t_{n},u^{n})$
(serving as an approximation to $u(t_{n-1+\theta})$),
while
$\widehat u^{n-1+\mu}$ denotes the extrapolated value at $t_{n-1+\mu}=t_{n-1}+\mu\tau$ 
of the linear interpolant based on $(t_{n-2},u^{n-2})$ and $(t_{n-1},u^{n-1})$
(serving as an approximation to $u(t_{n-1+\mu})$).
Note that, even if $\theta = \mu$,
$u^{n-1+\theta} \neq \widehat u^{n-1+\mu}$ in general.

Since ${\widehat u}^{n-1+\mu}$ relies on two previous approximations for positive $\mu$,
for the given initial value $u^0,$ we first determine a second starting approximation $u^1$ by employing
a single step of the linearly implicit Euler method \eqref{eq:iEa}. 
Subsequently, for given approximations  $u^{n-2}$ and $u^{n-1},$ 
we seek $d_t u^n\in H_\DD^1(\varOmega; \R^\ell)$, 
satisfying the linearized unit-length condition $d_t u^n \cdot {\widehat u}^{n-1+\mu} = 0,$
such that 
\begin{equation}\label{eq:CNa}
(d_t u^n, v)_\star
+  (\nabla u^{n-1+\theta},\nabla v)
=
(d_t u^n, v)_\star
+  (\nabla \big [u^{n-1} +\theta  \tau d_t u^n\big ],\nabla v)
= 0
\end{equation}
for all $v\in H_\DD^1(\varOmega; \R^\ell)$ with $v\cdot{\widehat u}^{n-1+\mu}=0$. 
Thus, upon computing $d_t u^n,$ a new approximation is defined as
\[u^n:=u^{n-1} +  \tau d_t u^n.\]
The new approximation $u^n$ satisfies the required Dirichlet boundary condition $u^n|_{\varGamma_\DD}=u_\DD,$
provided the approximation $u^{n-1}$ at the previous time level satisfies this condition. 

We will demonstrate that, for suitable choices of the parameters $\theta$ and $\mu$,
the linearly implicit $(\theta,\mu)$-method \eqref{eq:CNa} is energy decreasing
and, under a sharp regularity condition on the numerical solution,
satisfies a constraint violation estimate of second order.
We present the main theoretical results in the next subsection.

\begin{remark}[relevant choices for $\theta$ and $\mu$]
We now comment on the choice of the parameters $\theta$ and $\mu$ in the proposed method.
The resulting schemes will be compared with each other numerically in \cref{Se:num}.\\
(i) For $\theta=1$ and $\mu=0$, \eqref{eq:CNa} reduces to the implicit Euler method \eqref{eq:iEa}. \\
(ii) For $\theta=1/2$ and $\mu=1/2$, we get the linearly implicit midpoint method,
which will be unconditionally energy stable and will achieve second-order accuracy in approximating the unit-length constraint; 
see \cref{pro:energy,prp:Contraint violation2} below.\\
(iii) For $\theta=1$ and $\mu=1/2$, we get a modified implicit Euler method. 
In this case, the variational formulation \eqref{eq:CNa} to be solved at each iteration is the same as that of the linearly implicit 
Euler method \eqref{eq:iEa}. However, like in the midpoint method for $n \geqslant 2$, the orthogonality constraint  is considered 
with respect to the extrapolated value ${\widehat u}^{n-1/2}$.
We will see that the modified linearly implicit Euler method will be characterized by the same energy decay property 
\eqref{eq:iE-energ-stab} as the standard linearly implicit Euler method,
but will have the same quadratic constraint accuracy as the midpoint method;
see \cref{pro:energy,prp:Contraint violation2} below. 
\end{remark}

We now summarize the proposed $(\theta,\mu)$-method \eqref{eq:CNa} in the following algorithm. 
 
\begin{algorithm}[$(\theta,\mu)$-method]
\label{alg}
\begin{algorithmic}
\STATE{Choose $u^0\in H^1(\varOmega;\R^\ell)$ with $u^0|_{\varGamma_\DD} = u_\DD$ and $|u^0| = 1$.}
\STATE{(0) Compute $d_t u^1 \in H^1_\DD(\varOmega;\R^\ell)$ such that $d_t u^1 \cdot u^0 = 0$ and
\[
(d_t u^1, v)_\star + (\nabla [u^0+\tau d_t u^1],\nabla v) = 0
\]
for all $v\in H^1_\DD(\varOmega;\R^\ell)$ with $v\cdot u^0 = 0$; set $u^1 = u^0+\tau d_t u^1$ and $n=2$.}
\STATE{(1) Set ${\widehat u}^{n-1+\mu}:= u^{n-1}+\mu\tau\d_tu^{n-1} $.
Compute $d_t u^n \in H^1_\DD(\varOmega;\R^\ell)$ such that $d_t u^n \cdot {\widehat u}^{n-1+\mu} = 0$ and
\[
(d_t u^n, v)_\star + (\nabla [u^{n-1}+\theta\tau d_t u^n],\nabla v) = 0
\]
for all $v\in H^1_\DD(\varOmega;\R^\ell)$ with $v\cdot {\widehat u}^{n-1+\mu} = 0$.
Then, set $u^n = u^{n-1}+ \tau d_t u^n$.}
\STATE{(2) Stop if $\|d_t u^n\|_\star + \theta\tau \|\nabla (d_t u^n)\|  \leqslant \veps_{\rm stop}$ or if $n \tau \geqslant T$.}
\STATE{(3) Increase $n \to n+1$ and continue with~(1).}
\end{algorithmic}
\end{algorithm}
 
\begin{remark}
The first stopping criterion used in \cref{alg}, i.e., step~(2),
applies to the case of energy minimization
and is chosen in such a way that, if we define the approximate harmonic map
generated by the algorithm as
$u_h^{N_\stop-1}$,
where $N_\stop$ is the smallest integer for which the stopping criterion is met,
then the following relationship holds
\begin{equation}\label{stop 1}
(\nabla u_h^{N_\stop-1},\nabla v) = R(v)
\text{ for all admissible } v
\quad
\text{and}
\quad
\| R \|_{H^1(\varOmega)^\star} \leqslant \veps_{\text{stop}} . 
\end{equation}
The aim is to ensure a fair comparison of the methods in numerical experiments:
By utilizing the same tolerance for the standard Euler, modified Euler, and midpoint methods, 
the resulting approximate harmonic maps
satisfy the harmonic map equation~\eqref{eq:varEL2} with a comparable accuracy
(in the sense that the dual norm of the residual can be bounded by the same tolerance).
The second criterion applies to the case in which one is interested in 
approximating the gradient flow dynamics until a certain prescribed final time $T>0$.
\end{remark}

\subsection{Energy decay and constraint violation for the $\boldsymbol{(\theta,\mu)}$-method} 
 
Hereafter,
we assume that the norm induced by the inner product $(\cdot, \cdot)_\star$
satisfies
\begin{equation} \label{eq:metric_equivalence}
\| v \| \leqslant c_\star^{1/2} \| v \|_\star
\quad
\text{for all }
v \in H^1_\DD(\varOmega;\R^\ell).
\end{equation}
This inequality trivially holds for both the $L^2$- and $H^1$-norms.
 
In the following proposition, we show the unconditional stability of the method.

\begin{proposition}[energy identity]\label{pro:energy}
The sequence generated by \cref{alg} satisfies, for all $m\geqslant 2$,
the energy identity
\begin{equation}\label{eq:CV-energ-stab}
\frac{1}{2}\|\nabla u^m\|^2
+\tau \sum_{n=1}^m \|d_t u^n\|_\star^2 
+ \frac{\tau^2}{2} \|\nabla d_tu^1\|^2
+ \Big(\theta-\frac12\Big) \tau^2\sum_{n=2}^m\|\nabla d_tu^n\|^2
= \frac12 \|\nabla u^0\|^2.
 \end{equation}
\end{proposition}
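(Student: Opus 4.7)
The plan is to test the variational formulation at each step with $v = \tau d_t u^n$, which is the natural choice since this function lies in the admissible test space by construction of the algorithm. Indeed, the scheme imposes $d_t u^n \cdot \widehat{u}^{n-1+\mu} = 0$ for $n \geqslant 2$, and $d_t u^1 \cdot u^0 = 0$ for the initial Euler step, so in both cases $v := \tau d_t u^n$ is admissible in the corresponding variational equation.

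I would first dispose of the starting step $n=1$ by quoting the energy identity \eqref{eq:iE-energ-stab} already derived in the introduction for the linearly implicit Euler step, which yields
\begin{equation*}
\tfrac12 \|\nabla u^1\|^2 - \tfrac12 \|\nabla u^0\|^2 + \tau \|d_t u^1\|_\star^2 + \tfrac{\tau^2}{2} \|\nabla d_t u^1\|^2 = 0 .
\end{equation*}

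For $n \geqslant 2$, inserting $v = \tau d_t u^n$ into \eqref{eq:CNa} gives
\begin{equation*}
\tau \|d_t u^n\|_\star^2 + \tau(\nabla u^{n-1},\nabla d_t u^n) + \theta\tau^2 \|\nabla d_t u^n\|^2 = 0 .
\end{equation*}
The key algebraic manipulation is to exploit the relation $\nabla u^n = \nabla u^{n-1} + \tau \nabla d_t u^n$ and the polarization identity to rewrite the middle term as
\begin{equation*}
\tau(\nabla u^{n-1},\nabla d_t u^n) = \tfrac12 \|\nabla u^n\|^2 - \tfrac12 \|\nabla u^{n-1}\|^2 - \tfrac{\tau^2}{2}\|\nabla d_t u^n\|^2 .
\end{equation*}
Substituting yields the per-step identity
\begin{equation*}
\tfrac12\|\nabla u^n\|^2 - \tfrac12\|\nabla u^{n-1}\|^2 + \tau\|d_t u^n\|_\star^2 + \bigl(\theta - \tfrac12\bigr)\tau^2\|\nabla d_t u^n\|^2 = 0 .
\end{equation*}

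To conclude, I would sum this identity from $n=2$ to $m$ so the $\|\nabla u^n\|^2$ terms telescope, then add the $n=1$ identity to absorb the $\tfrac12\|\nabla u^1\|^2$ contribution and recover $\tfrac12\|\nabla u^0\|^2$ on the right-hand side. The asymmetry in the final formula (namely the $\tfrac{\tau^2}{2}\|\nabla d_t u^1\|^2$ term isolated from the $(\theta-\tfrac12)$-weighted sum) reflects the fact that the starting step uses implicit Euler irrespective of the value of $\theta$. There is no real obstacle here: the only subtlety worth emphasizing is the admissibility of $d_t u^n$ as a test function, which is precisely the reason the orthogonality constraint in \cref{alg} is imposed against $\widehat{u}^{n-1+\mu}$ rather than against $u^{n-1}$ or $u^n$, since otherwise this testing argument would not be available without an extra projection.
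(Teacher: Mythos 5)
Your proof is correct and takes essentially the same route as the paper's: test \eqref{eq:CNa} with (a multiple of) $d_t u^n$, rewrite the cross term via $\nabla u^n = \nabla u^{n-1} + \tau\nabla d_t u^n$ to obtain the telescoping per-step identity with the $(\theta-\tfrac12)\tau^2\|\nabla d_t u^n\|^2$ weight, sum from $n=2$ to $m$, and append the implicit Euler identity \eqref{eq:iE-energ-stab} for the starting step. One small inaccuracy in your closing aside: the admissibility of $v=\tau d_t u^n$ does not hinge on the constraint being imposed against $\widehat u^{n-1+\mu}$ --- testing with $d_t u^n$ is equally available when the constraint is against $u^{n-1}$, as in the Euler scheme \eqref{eq:iEa}; the extrapolated constraint vector matters for the second-order constraint-violation analysis, not for this energy argument.
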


\begin{proof}
Testing \eqref{eq:CNa} by the admissible test function $v:=d_t u^n,$ 
we have 
\[\|d_t u^n\|_\star^2 +(\nabla [u^{n-1}+\theta\tau d_tu^n],\nabla  d_t u^n) = 0,\]
i.e.,
\[\|d_t u^n\|_\star^2 + (\nabla u^{n-1/2}+(\theta-1/2) \tau \nabla d_t u^n,\nabla  d_t u^n ) = 0,\]
and thus
\[\|d_t u^n\|_\star^2 +\frac 1{2\tau} \big (\|\nabla u^n\|^2-\|\nabla u^{n-1}\|^2\big ) 
+\Big(\theta-\frac12\Big)\tau\| \nabla d_tu^n\|^2
=0.\]
Multiplying this relation by $\tau$ and summing over $n$ from $n=2$ to $n=m$
yields
\begin{equation*}
\frac{1}{2}\|\nabla u^m\|^2+\tau  \sum_{n=2}^m \|d_t u^n\|_\star^2 
+\Big(\theta-\frac12\Big) \tau^2\sum_{n=2}^m\|\nabla d_tu^n\|^2
=\frac12 \|\nabla u^1\|^2.
\end{equation*}
Using the energy identity~\eqref{eq:iE-energ-stab}
for the first step ($n=1$) performed with the linearly implicit Euler method, we
obtain the asserted identity.
\end{proof}
 
We now proceed with studying the constraint violation properties, which provide an unconditional linear rate 
and a quadratic rate under a mild but necessary discrete regularity condition.

For a sequence $(v^n)_{n \geqslant 0}$,
let $d_t^2 v^n$ denote the second difference quotient
\[d_t^2 v^n := \frac{1}{\tau} (d_t v^n - d_tv^{n-1})
=\frac{1}{\tau^2} (v^n - 2 v^{n-1} + v^{n-2}),\quad n\geqslant 2.\] 
The following lemma will be useful in the analysis; it presents a discrete version of the identity $\partial_t |v|^2 = 2 \partial_t v\cdot v$. 

\begin{lemma}[discrete chain rule]\label{Le:orthog-time-deriv}
For a sequence $(v^n)_{n \geqslant 0}$, we have
\[2 d_t v^n \cdot {\widehat v}^{n-1+\mu}
=d_t|v^n|^2- \mu \tau^2 d_t|d_t v^n|^2 - \mu \tau^3 |d_t^2 v^n|^2
-\left(1-2\mu\right)\tau|d_tv^n|^2 ,
\]
for $n \geqslant 2$. 
\end{lemma}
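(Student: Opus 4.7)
The identity is essentially an exercise in summation-by-parts at the discrete level, and my plan is to derive it by two applications of the polarization identity $2a\cdot b = |a|^2+|b|^2-|a-b|^2$ (in its telescoping form $|v|^2-|w|^2 = (v+w)\cdot(v-w)$).

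First I would expand the test point as $\widehat v^{n-1+\mu} = v^{n-1}+\mu\tau d_tv^{n-1}$ to split the left-hand side into two pieces:
\[
2 d_tv^n\cdot\widehat v^{n-1+\mu} = 2 d_tv^n\cdot v^{n-1} + 2\mu\tau\, d_tv^n\cdot d_tv^{n-1}.
\]
The first piece is handled by the standard discrete chain rule at $v^{n-1}$: writing $v^n+v^{n-1} = 2v^{n-1}+\tau d_tv^n$ gives
\[
d_t|v^n|^2 = d_tv^n\cdot(v^n+v^{n-1}) = 2 d_tv^n\cdot v^{n-1} + \tau|d_tv^n|^2,
\]
so that $2 d_tv^n\cdot v^{n-1} = d_t|v^n|^2-\tau|d_tv^n|^2$.

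Next I would treat the cross term $2\mu\tau\,d_tv^n\cdot d_tv^{n-1}$ using polarization with $a=d_tv^n$ and $b=d_tv^{n-1}$, noting $a-b=\tau d_t^2v^n$. This yields
\[
2\mu\tau\,d_tv^n\cdot d_tv^{n-1} = \mu\tau|d_tv^n|^2 + \mu\tau|d_tv^{n-1}|^2 - \mu\tau^3|d_t^2v^n|^2.
\]
Substituting both pieces gives
\[
2d_tv^n\cdot\widehat v^{n-1+\mu} = d_t|v^n|^2 - (1-\mu)\tau|d_tv^n|^2 + \mu\tau|d_tv^{n-1}|^2 - \mu\tau^3|d_t^2v^n|^2.
\]
Finally I would recognize the combination $\mu\tau|d_tv^{n-1}|^2-\mu\tau|d_tv^n|^2 = -\mu\tau^2 d_t|d_tv^n|^2$ and rewrite the remaining $|d_tv^n|^2$-coefficient as $-(1-\mu)\tau+\mu\tau = -(1-2\mu)\tau$ to match the claimed right-hand side.

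There is no real obstacle here: the statement is a purely algebraic identity and every step is forced once one decides to use polarization to extract $|d_t^2v^n|^2$. The only thing to be careful about is the sign bookkeeping when assembling the three $|d_tv^n|^2$-type terms into the prescribed $d_t|d_tv^n|^2$ and $(1-2\mu)\tau|d_tv^n|^2$ combination.
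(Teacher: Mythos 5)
Your proof is correct: each step checks out, and the final assembly does produce the asserted right-hand side. Your route differs from the paper's in the choice of anchor point for the decomposition. The paper splits off the midpoint, writing
\[
2 d_t v^n \cdot \widehat v^{n-1+\mu}
= 2 d_t v^n \cdot v^{n-1/2} - 2 d_t v^n \cdot \bigl(v^{n-1/2} - \widehat v^{n-1+\mu}\bigr),
\]
so that the exact discrete chain rule $2 d_t v^n \cdot v^{n-1/2} = d_t |v^n|^2$ appears immediately; the deviation is then expanded as $\mu\tau(d_t v^n - d_t v^{n-1}) + (1/2-\mu)\tau\, d_t v^n$, the $(1-2\mu)\tau |d_t v^n|^2$ term drops out of the second summand at once, and the remaining cross term is handled by the one-sided polarization $a\cdot(a-b) = \tfrac12\bigl(|a|^2 - |b|^2\bigr) + \tfrac12 |a-b|^2$. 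You instead anchor at $v^{n-1}$, which yields the chain rule plus an extra $\tau |d_t v^n|^2$ correction, and you polarize the symmetric product $d_t v^n \cdot d_t v^{n-1}$ via $2a\cdot b = |a|^2 + |b|^2 - |a-b|^2$; the coefficient $-(1-2\mu)\tau$ then only emerges at the very end when the three $|d_t v^n|^2$-type terms are recombined. The two computations are algebraically equivalent and equally short. The paper's midpoint anchoring has the conceptual advantage of making transparent why $\mu = 1/2$ is distinguished (there $\widehat v^{n-1/2}$ approximates $v^{n-1/2}$ and the first-order term vanishes by construction rather than by cancellation), while your version avoids introducing $v^{n-1/2}$ altogether and keeps all sign bookkeeping explicit; both arguments carry over verbatim to the variable-step-size setting of \cref{Le:orthog-time-deriv-variable}, where your step $v^n + v^{n-1} = 2v^{n-1} + \tau_n d_t v^n$ works unchanged.
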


\begin{proof}
We start by splitting the left-hand side of the asserted identity as
\begin{align*}
2 d_t v^n \cdot {\widehat v}^{n-1+\mu}
&= 2 d_t v^n \cdot v^{n-1/2}
- 2d_t v^n \cdot (v^{n-1/2}-{\widehat v}^{n-1+\mu}) \\
&=d_t|v^n|^2 - 2\mu\tau d_t v^n \cdot (d_t v^n-d_tv^{n-1}) - \left(1-2\mu \right)\tau|d_tv^n|^2 ,
\end{align*}
where we have used the relation
\begin{align*}
v^{n-1/2}-{\widehat v}^{n-1+\mu}
&=
v^{n-1/2}-v^{n-1}-\mu\tau d_tv^{n-1} \\
&=
\tau d_tv^n/2 - \mu\tau d_tv^{n-1} \\
&=
\mu\tau(d_tv^n-d_tv^{n-1}) +\left(1/2-\mu\right)\tau d_tv^n .
\end{align*} 
Note that 
\begin{equation*}
\begin{split}
 d_t v^n \cdot (d_tv^n-d_tv^{n-1})
&=\big (|d_t v^n|^2-|d_t v^{n-1}|^2\big)/2
+ \tau^2|d_t^2 v^n|^2/2 \\
&=\tau d_t|d_tv^n|^2/2
+ \tau^2|d_t^2 v^n|^2/2;
\end{split}
\end{equation*}
then, the desired result follows immediately. 
\end{proof}

From \cref{Le:orthog-time-deriv} we deduce that,
if $d_t u^n \cdot {\widehat u}^{n-1+\mu} = 0$, then
\begin{equation}\label{eq:orthog-time-deriv2}
d_t|u^n|^2=\mu \tau^2 d_t|d_t u^n|^2+ \mu \tau^3 |d_t^2 u^n|^2
+ \left(1-2\mu\right)\tau|d_tu^n|^2 ,
\end{equation}
for all $n\geqslant 2$.
Using this identity, in the following proposition, we establish the constraint violation property of the method.

\begin{proposition}[constraint violation error]\label{prp:Contraint violation}
The sequence generated by \cref{alg} satisfies, for all $m\geqslant 2$,
\begin{equation}\label{eq:CN-constr-viol-5}
|u^m|^2 -1 =\mu \tau^2 |d_t u^m|^2 + (1-\mu)\tau^2 |d_t u^1|^2 + \mu \tau^4 \sum_{n=2}^m  |d_t^2 u^n|^2   + (1-2\mu)\tau^2\sum_{n=2}^m |d_tu^n|^2
\end{equation}
and%
\begin{equation}\label{eq:CN-constr-viol-a}
\begin{split}
\| |u^m|^2 -1\|_{L^1} & \leqslant \tau^2 \Big(\mu\|d_t u^m\|^2 +(1-\mu) \|d_t u^1\|^2 \\  
& \qquad\quad + \mu\tau^2 \sum_{n=2}^m  \|d_t^2 u^n\|^2 +|1-2\mu| \sum_{n=2}^m\|d_tu^n\|^2     \Big).
\end{split}
\end{equation}
\end{proposition}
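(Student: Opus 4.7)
The plan is to combine the telescoping of identity~\eqref{eq:orthog-time-deriv2} from $n=2$ to $n=m$ with a direct computation of the single Euler starting step $n=1$, and then take the $L^1$-norm.

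First, since \cref{alg} enforces $d_tu^n\cdot {\widehat u}^{n-1+\mu}=0$ for all $n\geqslant 2$, I can apply \cref{Le:orthog-time-deriv} (in the reformulation~\eqref{eq:orthog-time-deriv2}) pointwise. Multiplying that identity by $\tau$ and summing over $n=2,\dots,m$, the terms $\tau d_t|u^n|^2$ and $\tau d_t|d_tu^n|^2$ telescope, producing
\[
|u^m|^2-|u^1|^2=\mu\tau^2\bigl(|d_tu^m|^2-|d_tu^1|^2\bigr)+\mu\tau^4\sum_{n=2}^{m}|d_t^2u^n|^2+(1-2\mu)\tau^2\sum_{n=2}^{m}|d_tu^n|^2 .
\]

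Next, I must treat the initial step separately, since \cref{Le:orthog-time-deriv} is only valid for $n\geqslant 2$. Here the orthogonality prescribed by step~(0) of \cref{alg} is $d_tu^1\cdot u^0=0$ (not involving the extrapolant), together with $|u^0|=1$ and $u^1=u^0+\tau d_tu^1$. Expanding $|u^1|^2=|u^0|^2+2\tau d_tu^1\cdot u^0+\tau^2|d_tu^1|^2$, the cross term vanishes and yields the simple identity $|u^1|^2-1=\tau^2|d_tu^1|^2$.

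Adding the two contributions and rearranging the $|d_tu^1|^2$ coefficients (so that $-\mu\tau^2|d_tu^1|^2+\tau^2|d_tu^1|^2=(1-\mu)\tau^2|d_tu^1|^2$) gives \eqref{eq:CN-constr-viol-5}. Finally, to obtain \eqref{eq:CN-constr-viol-a}, I integrate the pointwise identity over $\varOmega$, apply the triangle inequality on the last sum (which carries the only sign-indefinite coefficient $1-2\mu$), and use $\int_\varOmega|d_tu^n|^2\,\d x=\|d_tu^n\|^2$ and similarly for $d_t^2u^n$, $d_tu^1$, $d_tu^m$.

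The only mild subtlety is the need to invoke the Euler-step orthogonality $d_tu^1\cdot u^0=0$ rather than attempting to extend~\eqref{eq:orthog-time-deriv2} to $n=1$; the rest is pure telescoping and application of the triangle inequality, so no genuine obstacle is expected.
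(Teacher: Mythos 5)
Your proposal is correct and takes essentially the same route as the paper's own proof: telescoping the identity \eqref{eq:orthog-time-deriv2} multiplied by $\tau$ over $n=2,\dots,m$, handling the starting step via the Euler orthogonality $d_t u^1\cdot u^0=0$ and $|u^0|=1$ to obtain $|u^1|^2-1=\tau^2|d_t u^1|^2$, and then integrating over $\varOmega$ with the triangle inequality applied to the sign-indefinite $(1-2\mu)$-term. The only difference is presentational: you expand $|u^1|^2=|u^0|^2+2\tau\, d_t u^1\cdot u^0+\tau^2|d_t u^1|^2$ explicitly, whereas the paper cites this identity directly from the linearly implicit Euler step.
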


\begin{proof}
Summing in \eqref{eq:orthog-time-deriv2} over $n$ from $n=2$ to $n=m$ and multiplying the result by $\tau$,
we immediately obtain 
\begin{equation*}
|u^m|^2 -|u^1|^2 =\mu\tau^2\Big ( |d_t u^m|^2-|d_t u^1|^2+ \tau^2 \sum_{n=2}^m  |d_t^2 u^n|^2\Big )
+\left(1- 2\mu\right) \tau^2\sum_{n=2}^m |d_tu^n|^2.
\end{equation*}
As $|u^0|^2 =1$ and $u^1$ is computed by the linearly implicit Euler method,
we have $|u^1|^2 =|u^0|^2+ \tau^2|d_t u^1|^2=1+ \tau^2|d_t u^1|^2$,
which yields \eqref{eq:CN-constr-viol-5}.
Integration of \eqref{eq:CN-constr-viol-5} over $\varOmega$ then leads to the constraint violation relation \eqref{eq:CN-constr-viol-a}. 
\end{proof}

From \cref{prp:Contraint violation}, we immediately deduce the following properties:\\
(a) If $0 \leqslant \mu\leqslant 1/2$, then $|u^m|\geqslant 1$ almost everywhere in $\varOmega$.\\
(b) If $\mu=0$, \eqref{eq:CN-constr-viol-5} reduces to the first-order constraint violation \eqref{eq:iE-constr-viol} of the linearly implicit Euler method.\\
(c) If $\mu = 1/2$ (midpoint and modified Euler methods), then \eqref{eq:CN-constr-viol-5} reduces to
\begin{equation*}
|u^m|^2 -1 = \frac{\tau^2}2 \Big( |d_t u^m|^2 + |d_t u^1|^2 + \tau^2 \sum_{n=2}^m  |d_t^2 u^n|^2 \Big).
\end{equation*}
In the following proposition,
we provide uniform upper bounds of the constraint violation error.

\begin{proposition}[bound of the constraint violation error]\label{prp:Contraint violation2}
Let $1/2 \leqslant \theta \leqslant 1$.\\
(a)
There exists $c_1>0$ such that $\| |u^m|^2 -1\|_{L^1} \leqslant c_1\tau$ for all $m\geqslant 2$ unconditionally.\\
(b)
Let $\mu=1/2$.
There exists $c_2>0$ such that $\| |u^m|^2 -1\|_{L^1} \leqslant c_2\tau^2$ for $m\geqslant 2$
if and only if there exists $c>0$ such that  the discrete regularity property 
\begin{equation}\label{eq:mild_reg-CN}
\|d_t u^m\|^2  + \|d_t u^1\|^2  + \tau^2 \sum_{n=2}^m  \|d_t^2u^n\|^2 \leqslant c
\end{equation}
is valid.
The constants $c_1,c_2>0$ depend on the energy of $u^0$
and the constant $c_\star>0$ in~\eqref{eq:metric_equivalence};
$c_2$ depends also on the constant $c$ in~\eqref{eq:mild_reg-CN}.
\end{proposition}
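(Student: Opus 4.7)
The plan is to derive both statements from the constraint-violation results of \cref{prp:Contraint violation} combined with the energy identity~\eqref{eq:CV-energ-stab} and the norm equivalence~\eqref{eq:metric_equivalence}. The hypothesis $\theta\geqslant 1/2$ ensures that every term on the left-hand side of~\eqref{eq:CV-energ-stab} is nonnegative, yielding the master a~priori bound
\[
\tau\sum_{n=1}^{m}\|d_tu^n\|_\star^{2}\leqslant\tfrac12\|\nabla u^0\|^{2}.
\]

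For part~(a), I would start from the inequality~\eqref{eq:CN-constr-viol-a} and bound each of the four contributions by a multiple of $\tau$. For the single-index terms $\tau^2\|d_tu^m\|^2$ and $\tau^2\|d_tu^1\|^2$, the master bound above implies $\|d_tu^k\|_\star^2\leqslant\tfrac{1}{2\tau}\|\nabla u^0\|^2$ for each fixed $k$, so~\eqref{eq:metric_equivalence} yields $\tau^2\|d_tu^k\|^2\leqslant\frac{c_\star\tau}{2}\|\nabla u^0\|^2$. The summed term $|1-2\mu|\tau^2\sum_{n=2}^{m}\|d_tu^n\|^2$ is controlled directly by writing $\tau^2\sum_n\|d_tu^n\|^2\leqslant c_\star\tau\cdot\tau\sum_n\|d_tu^n\|_\star^2\leqslant\frac{c_\star\tau}{2}\|\nabla u^0\|^2$. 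Finally, the second-difference contribution $\mu\tau^4\sum_{n=2}^{m}\|d_t^2u^n\|^2$ is reduced to the preceding sum via the triangle inequality $\tau d_t^2u^n=d_tu^n-d_tu^{n-1}$, which gives $\tau^2\|d_t^2u^n\|^2\leqslant 2\bigl(\|d_tu^n\|^2+\|d_tu^{n-1}\|^2\bigr)$. Collecting the four estimates fixes $c_1$ as an explicit multiple of $c_\star\|\nabla u^0\|^2$.

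For part~(b), the key observation is that the choice $\mu=1/2$ cancels the $(1-2\mu)$-term in the \emph{pointwise} identity~\eqref{eq:CN-constr-viol-5}, leaving
\[
|u^m|^{2}-1=\tfrac{\tau^{2}}{2}\Bigl(|d_tu^m|^{2}+|d_tu^1|^{2}+\tau^{2}\sum_{n=2}^{m}|d_t^{2}u^n|^{2}\Bigr),
\]
whose right-hand side is nonnegative a.e.\ in $\varOmega$. Integration over $\varOmega$ therefore produces the \emph{equality}
\[
\bigl\||u^m|^{2}-1\bigr\|_{L^{1}}=\tfrac{\tau^{2}}{2}\Bigl(\|d_tu^m\|^{2}+\|d_tu^1\|^{2}+\tau^{2}\sum_{n=2}^{m}\|d_t^{2}u^n\|^{2}\Bigr),
\]
from which both implications of the claimed equivalence become trivial: if~\eqref{eq:mild_reg-CN} holds with constant $c$ then $c_2=c/2$ works, and conversely a bound $\||u^m|^2-1\|_{L^1}\leqslant c_2\tau^2$ forces the parenthesized sum to be $\leqslant 2c_2$.

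The only mildly delicate point in the argument is part~(a): we have no uniform control of the individual quantity $\|d_tu^m\|^2$ in $m$, so one must observe that the energy estimate bounds the \emph{weighted} sum $\tau\sum\|d_tu^n\|_\star^2$, not each summand; hence one power of $\tau$ is absorbed by the energy estimate for a single index and the remaining $\tau$ produces the advertised linear rate. Once this is recognized, every step is routine bookkeeping, and both constants $c_1$ and $c_2$ depend on $\|\nabla u^0\|^2$, $c_\star$, and (in part~(b)) the constant $c$ in~\eqref{eq:mild_reg-CN}, as asserted.
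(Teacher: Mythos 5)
Your proposal is correct and follows essentially the same route as the paper: part~(a) via the inverse inequality $\tau^2\sum_{n=2}^m\|d_t^2u^n\|^2\leqslant 4\sum_{n=1}^m\|d_tu^n\|^2$ combined with the energy identity~\eqref{eq:CV-energ-stab} (valid termwise since $\theta\geqslant 1/2$) and the norm equivalence~\eqref{eq:metric_equivalence}, and part~(b) via the observation that for $\mu=1/2$ the pointwise identity~\eqref{eq:CN-constr-viol-5} has a nonnegative right-hand side, so the $L^1$ bound is in fact an equality, giving both directions of the equivalence. Your write-up merely makes explicit two points the paper leaves terse (the single-index bound $\tau\|d_tu^k\|_\star^2\leqslant\tfrac12\|\nabla u^0\|^2$ and the equality behind the paper's phrase ``by design''), which is sound bookkeeping rather than a different argument.
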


\begin{proof}
For all $n \geqslant 2$, the identity $\tau d_t^2 u^n=d_t u^n-d_t u^{n-1}$ implies 
\[ \tau^2 \|d_t^2 u^n\|^2\leqslant 2\big (\|d_t u^n\|^2+\|d_t u^{n-1}\|^2\big ),\]
and summation over $n$ yields  the inverse inequality
\begin{equation}\label{eq:inv-ineq}
\tau^2 \sum_{n=2}^m  \|d_t^2 u^n\|^2
\leqslant 4 \sum_{n=1}^m  \|d_t u^n\|^2.
\end{equation}
Combining the inverse inequality \eqref{eq:inv-ineq} with the energy stability~\eqref{eq:CV-energ-stab}
from \cref{pro:energy}, we see that the bound in~\eqref{eq:CN-constr-viol-a} is of order $\tau$ unconditionally.
If $\mu=1/2$, by design, the bound is of order $\tau^2$ if and only if the sharp discrete regularity condition \eqref{eq:mild_reg-CN} holds.
\end{proof}

The unconditional stability and the control of the constraint violation
allow to apply weak compactness arguments and show the weak convergence
of a subsequence of approximations toward a harmonic map; see, e.g., the proofs in~\cite[Ch.~7]{Bartels-book}.

\begin{remark}[beyond harmonic maps]
The proposed approach is general and can be applied to a vast class of geometrically constrained partial differential equations.
As an example, consider the LLG equation (see, e.g., \cite{BKP,Al2,AKST,DFPPRS,AFKL})
\begin{equation*}
\partial_t m = - m \times \varDelta m + \alpha \, m \times \partial_t m,
\end{equation*}
which models the dynamics of the magnetization $m$, a unit-length vector field, in ferromagnetic materials.
Here, $\alpha>0$ denotes the so-called Gilbert damping constant.
Applying the approach to this problem leads to the following method:
Given $m^0$ and $m^1$ (with $m^1$ computed by one step of the linearly implicit Euler method),
for $n \geqslant 2$, set ${\widehat m}^{n-1+\mu}:= m^{n-1}+\mu\tau\d_tm^{n-1}$
and compute $d_t m^n \in H^1_\DD(\varOmega;\R^\ell)$ such that $d_t m^n \cdot {\widehat m}^{n-1+\mu} = 0$ and
\begin{equation*}
\alpha (d_t m^n, v)
+ ({\widehat m}^{n-1+\mu} \times d_t m^n, v)
+ (\nabla [m^{n-1}+\theta\tau d_t m^n],\nabla v) = 0
\end{equation*}
for all $v\in H^1_\DD(\varOmega;\R^\ell)$ with $v\cdot {\widehat m}^{n-1+\mu} = 0$.
Combining the analysis of this work with those in~\cite{Al2,AHPPRS},
one can show that the method is unconditionally stable, formally of second order in time
if $\theta = \mu = 1/2$,
and convergent toward a weak solution of the problem.
\end{remark}

\subsection{The linearly implicit two-step BDF method}\label{SSe:2.2}
In this subsection, we recall the linearly implicit two-step BDF method for harmonic maps proposed in~\cite{ABP}, 
along with its energy decay and constraint violation properties.
This will provide the theoretical foundations for the experimental comparison of the algorithms
carried out in \cref{Se:num}.

The discrete time derivative $\dot{u}^n$ associated with the two-step backward difference formula (BDF2) 
is expressed as 
\[\dot{u}^n = \frac{1}{2\tau}\big(3 u^n - 4 u^{n-1} + u^{n-2}\big).\]
Following \cite{AFKL}, let 
${\widehat u}^n := u^{n-1} + \tau d_t u^{n-1} = 2 u^{n-1} - u^{n-2}$
be the extrapolated value at $t_n=n\tau$ of the linear interpolant based on $(t_{n-2},u^{n-2})$
and $(t_{n-1},u^{n-1}).$ Notice that ${\widehat u}^n={\widehat u}^{n-1+\mu}$ for $\mu=1$ 
is an approximation to $u(t_n).$

For the given initial value $u^0,$ one can compute the second starting
approximation $u^1$ by a single step method, for instance, by employing
one step with the linearly implicit Euler method \eqref{eq:iEa}.
Then, for given approximations  $u^{n-2}$ and $u^{n-1},$ 
we first seek $\dot{u}^n\in H_\DD^1(\varOmega; \R^\ell)$, 
satisfying the linearized unit-length condition $\dot{u}^n \cdot {\widehat u}^n = 0,$
such that
\begin{equation}\label{eq:BDF2a}
(\dot{u}^n, v)_\star + \frac13 (\nabla [4 u^{n-1} - u^{n-2} + 2 \tau \dot{u}^n],\nabla v) = 0
\end{equation}
for all $v\in H_\DD^1(\varOmega; \R^\ell)$ with $v\cdot {\widehat u}^n=0$. 
Notice that  the new approximation
\[u^n:=\frac13 (4 u^{n-1} - u^{n-2} + 2 \tau \dot{u}^n)\]
satisfies the required Dirichlet boundary condition $u^n|_{\varGamma_\DD}=u_\DD,$
provided the approximations $u^{n-2}$ and $u^{n-1}$ satisfy this condition. 

The algorithm of the BDF2 method~\cite{ABP} is summarized as follows. 

\begin{algorithm}[BDF2 method]
\label{alg:bdf2_iter}
\begin{algorithmic}
\STATE{Choose $u^0\in H^1(\varOmega;\R^\ell)$ with $u^0|_{\varGamma_\DD} = u_\DD$ and $|u^0| = 1$.}
\STATE{(0) Compute $d_t u^1 \in H^1_\DD(\varOmega;\R^\ell)$ such that $d_t u^1 \cdot u^0 = 0$ and
\[
(d_t u^1, v)_\star + (\nabla [u^0+\tau d_t u^1],\nabla v) = 0
\]
for all $v\in H^1_\DD(\varOmega;\R^\ell)$ with $v\cdot u^0 = 0$; set $u^1 = u^0+\tau d_t u^1$ and $n=2$.}
\STATE{(1) Set $\hu^n = 2 u^{n-1} - u^{n-2}$ and 
compute $\dot{u}^n \in H^1_\DD(\varOmega;\R^\ell)$ with $\dot{u}^n \cdot \hu^n = 0$ and
\[
(\dot{u}^n, v)_\star + \frac13 (\nabla [4 u^{n-1} - u^{n-2} + 2 \tau \dot{u}^n],\nabla v) = 0
\]
for all $v\in H^1_\DD(\varOmega;\R^\ell)$ with $v\cdot \hu^n = 0$; set 
$u^n =(4 u^{n-1} - u^{n-2} + 2 \tau \dot{u}^n)/3$.}
\STATE{(2) Stop if $\|\dot{u}^n\|_\star + 2\tau \|\nabla \dot{u}^n\|/3 \leqslant \veps_{\text{stop}}$ or if $n \tau \geqslant T$.}
\STATE{(3) Increase $n \to n+1$ and continue with~(1).}
\end{algorithmic}
\end{algorithm}

\begin{remark}
(i) Notice also that \eqref{eq:BDF2a} can be written in the form
\begin{equation*}
(\dot{u}^n, v)_\star + (\nabla u^n,\nabla v) = 0
 \end{equation*}
for all $v\in H_\DD^1(\varOmega; \R^\ell)$ with $v\cdot {\widehat u}^n=0$.\\
(ii) Similarly to \cref{alg}, the first stopping criterion used in step~(2) of \cref{alg:bdf2_iter} 
is chosen in such a way that the following relationship holds
\begin{equation}\label{stop 2}
(\nabla u_h^{N_\stop-1},\nabla v) \approx R(v)
\text{ for all admissible } v
\quad
\text{and}
\quad
\| R \|_{H^1(\varOmega)^\star} \leqslant \veps_{\text{stop}} . 
\end{equation}
Note that the first condition in \eqref{stop 1} holds with equality sign for the implicit Euler, modified Euler, and midpoint methods, 
while the first condition in \eqref{stop 2} holds approximately for the BDF2 method.
The aim is to ensure a fair comparison of all four methods in numerical experiments.
\end{remark}

We now recall the main properties of the BDF2 method established in~\cite{ABP}:\\
(i) \emph{Energy decay property}:
Let $G \in \R^{2 \times 2}$ be the positive definite, symmetric matrix given by
\[G:=\frac 14 \begin{pmatrix*}[r] 5 & -2\\ -2 & 1
\end{pmatrix*}\!. \]
Let ${\mathcal U}^n=(u^n,u^{n-1})^\top$ and let  $\|\cdot\|_G$ be a BDF-adapted variant of the $L^2$ norm,
\[\|{\mathcal U}^n\|_G^2=(G{\mathcal U}^n, {\mathcal U}^n)=\frac 54 \|u^n\|^2 - (u^n, u^{n-1}) + \frac 14 \|u^{n-1}\|^2.\]
Then, the BDF2 method satisfies the following energy identity (G-stability) for all $m \geqslant 2$
\begin{equation*}
\|\nabla {\mathcal U}^m\|_G^2  + \tau  \sum_{n=2}^m \|\dot{u}^n\|_\star^2 
+  \frac{\tau^4}{4} \sum_{n=2}^m \|d_t^2 \nabla u^n\|^2
=  \|\nabla {\mathcal U}^1\|_G^2.
\end{equation*}
\emph{Constraint violation result}:  If $u^1$ is computed by the linearly implicit Euler method \eqref{eq:iEa},
the sequence $(|u^n|)_{n \geqslant 1}$ is increasing almost everywhere in $\varOmega$, and
we have the following constraint violation result:
For all $m\geqslant 2$, it holds that
\begin{equation}\label{eq:BDF2-constr-viol-a}
\big \|  |u^m|^2 - 1 \big \|_{L^1} =\frac 32 \Big (1-\frac 1{3^m}\Big )\tau^2 \|d_tu^1\|^2
+\frac 32\tau^4\sum_{n=2}^m\Big (1-\frac 1{3^{m+1-n}}\Big ) \|d_t^2 u^n\|^2.
\end{equation}
It follows that
$\big\||u^m|^2- 1\big\|_{L^1} \leqslant c_p \tau^p$
unconditionally for $p=1$.
The result holds for $p=2$ under the sharp discrete regularity condition
\begin{equation}\label{eq:mild_reg}
\|d_t u^1\|^2  + \tau^2 \sum_{n=2}^m  \|d_t^2u^n\|^2 \leqslant c.
\end{equation}

\section{Variable step sizes}\label{Se:3}

In this section, we extend
the $(\theta,\mu)$-method to allow for variable step sizes,
aiming to improve the accuracy of the numerical solutions near singularities
(e.g., a singularity at $t = 0$ for nonsmooth initial data or a finite time blow-up even for smooth initial data)
and to apply acceleration techniques
to speed up the convergence to stationary states,
retaining the unconditional energy stability and the constraint violation properties. 
The analysis of the BDF2 method depends crucially on its G-stability property; cf.\ \cite{ABP};
consequently, the extension to variable step sizes is cumbersome.

In this section, we set $t_0:=0,$ and, for given positive step sizes $(\tau_n)_{n \geqslant 1}$,
we define the nodes $t_n:=t_{n-1}+\tau_n$, $n \geqslant 1$.
Using this notation, we denote by  $$d_t u^n := \frac{u^n-u^{n-1}}{\tau_n}$$
the backward difference quotient.

For given starting approximation $u^0$,
we compute $u^1$ by one step of the implicit Euler method:
First, we determine $d_t u^1\in H_\DD^1(\varOmega; \R^\ell)$, 
satisfying the linearized unit-length condition $d_t u^1 \cdot u^{0} = 0,$
such that
\begin{equation} \label{eq:ie_var_tau}
(d_t u^1,v)_\star + (\nabla u^0+\tau_1 \nabla d_t u^1,\nabla v) = 0
\end{equation}
for all $v\in H_\DD^1(\varOmega; \R^\ell)$ with $v\cdot u^0=0$. 
Then, we define $u^1:=u^0+\tau_1  d_t u^1$.

Next, for $n \geqslant 2$, let $s_n:=\tau_n/\tau_{n-1}$ denote the ratio of two consecutive step sizes.
For given approximations  $u^{n-2}$ and $u^{n-1}$, and $0\leqslant \mu\leqslant 1$,
let
\[{\widehat u}^{n-1+\mu} := (1+\mu s_n ) u^{n-1} - \mu s_n u^{n-2}=u^{n-1} +\mu\tau_n d_tu^{n-1}\]
be the extrapolated value at $t_{n-1+\mu}=t_{n-1}+\mu\tau_n$ 
of the linear interpolant based on $(t_{n-2},u^{n-2})$ and $(t_{n-1},u^{n-1})$.
Then, seek $d_t u^n\in H_\DD^1(\varOmega; \R^\ell)$, 
satisfying the linearized unit-length condition $d_t u^n \cdot \widehat u^{n-1+\mu} = 0,$
such that
\begin{equation}\label{eq:CNa-vart}
(d_t u^n, v)_\star +  (\nabla \big [u^{n-1} + \theta  \tau_n d_t u^n\big ],\nabla v) = 0
\end{equation}
for all $v\in H_\DD^1(\varOmega; \R^\ell)$ with $v\cdot \widehat u^{n-1+\mu}=0$. 
Here, $0 < \theta \leqslant 1$.
Thus, having computed $d_t u^n,$ we define the new approximation
$u^n:=u^{n-1} +  \tau_n d_t u^n$
that satisfies the required Dirichlet boundary condition $u^n|_{\varGamma_\DD}=u_\DD$,
provided the approximation $u^{n-1}$ at the previous step satisfies this condition. 

\begin{remark}
Similarly as the $(\theta,\mu)$-method \eqref{eq:CNa} with constant step size for harmonic maps,
\eqref{eq:CNa-vart} reduces to
\begin{enumerate}[(i)]\itemsep=0pt
\item  the linearly implicit Euler method for $\theta=1$ and $\mu=0$;
\item the linearly implicit midpoint method for $\theta=1/2$ and $\mu=1/2$;
\item a modified linearly implicit Euler method for $\theta=1$ and $\mu=1/2$.
\end{enumerate}
\end{remark}

We now extend the analysis of the previous section to the case of variable step sizes.
We discuss the energy decay and constraint violation properties of the method.
It can be proved that the iteration \eqref{eq:CNa-vart} becomes stationary for $n\rightarrow\infty$.

We begin with the discrete energy law satisfied by the approximations.

\begin{proposition}[energy decay]\label{pro:energy2}
For the $(\theta,\mu)$-method \eqref{eq:CNa-vart} and $m\geqslant 2$,
if $u^1$ is computed by the linearly implicit Euler method~\eqref{eq:ie_var_tau},
we have 
\begin{equation}\label{eq:CV-energ-stab-variable}
\frac{1}{2}\|\nabla u^m\|^2
+ \sum_{n=1}^m \tau _n\|d_t u^n\|_\star^2 
+\frac{\tau_1^2}2 \|\nabla d_tu^1\|^2
+\left(\theta-\frac12\right)\sum_{n=2}^m \tau_n^2 \|\nabla d_tu^n\|^2 
=\frac12 \|\nabla u^0\|^2.
 \end{equation}
\end{proposition}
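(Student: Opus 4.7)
The plan is to follow the same template as the proof of \cref{pro:energy} (the constant step size case), exploiting the fact that variable step sizes cause no difficulty here precisely because the single-step $(\theta,\mu)$-method produces an energy identity at each time level independently, in contrast to multistep methods such as BDF2 whose G-stability argument is sensitive to step size ratios.

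First I would deal with the generic step $n \geqslant 2$. Since $d_t u^n \cdot \widehat u^{n-1+\mu} = 0$ by construction, the choice $v = d_t u^n$ is admissible in \eqref{eq:CNa-vart}, which yields
\begin{equation*}
\|d_t u^n\|_\star^2 + (\nabla u^{n-1} + \theta \tau_n \nabla d_t u^n,\, \nabla d_t u^n) = 0.
\end{equation*}
Writing $\nabla u^{n-1} + \theta\tau_n \nabla d_t u^n = \nabla u^{n-1/2} + (\theta - 1/2)\tau_n \nabla d_t u^n$ with $u^{n-1/2} := (u^n + u^{n-1})/2$ and using the telescoping identity $(\nabla u^{n-1/2}, \nabla d_t u^n) = \frac{1}{2\tau_n}(\|\nabla u^n\|^2 - \|\nabla u^{n-1}\|^2)$, I multiply through by $\tau_n$ to obtain
\begin{equation*}
\tau_n \|d_t u^n\|_\star^2 + \tfrac{1}{2}\bigl(\|\nabla u^n\|^2 - \|\nabla u^{n-1}\|^2\bigr) + \bigl(\theta - \tfrac{1}{2}\bigr)\tau_n^2 \|\nabla d_t u^n\|^2 = 0.
\end{equation*}
Summing from $n=2$ to $n=m$ telescopes the middle term and leaves $\tfrac12\|\nabla u^m\|^2 - \tfrac12\|\nabla u^1\|^2$ together with the two remaining sums.

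Next I would handle the starting step. Testing \eqref{eq:ie_var_tau} with the admissible $v = d_t u^1$ (admissible since $d_t u^1 \cdot u^0 = 0$) and using $\nabla u^1 = \nabla u^0 + \tau_1 \nabla d_t u^1$ to rewrite $(\nabla u^0, \nabla d_t u^1) = \frac{1}{2\tau_1}(\|\nabla u^1\|^2 - \|\nabla u^0\|^2) - \frac{\tau_1}{2}\|\nabla d_t u^1\|^2$, I obtain after multiplication by $\tau_1$ the identity
\begin{equation*}
\tfrac12\|\nabla u^1\|^2 + \tau_1\|d_t u^1\|_\star^2 + \tfrac{\tau_1^2}{2}\|\nabla d_t u^1\|^2 = \tfrac12\|\nabla u^0\|^2,
\end{equation*}
which is just the variable-step analogue of \eqref{eq:iE-energ-stab} for $n=1$. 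Adding this to the summed identity from the previous paragraph produces the claimed equality \eqref{eq:CV-energ-stab-variable}.

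There is no real obstacle: the argument is essentially a bookkeeping exercise since the single-step structure decouples consecutive time levels. The only point that needs care is distinguishing the first step (where the extra term $\tfrac{\tau_1^2}{2}\|\nabla d_t u^1\|^2$ comes with coefficient $1/2$ regardless of $\theta$ because the starting step uses the implicit Euler method, not the $(\theta,\mu)$-scheme) from the generic step (where the coefficient is $\theta - 1/2$). Tracking this asymmetry is what produces the two separate sums in the final identity.
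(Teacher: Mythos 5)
Your proof is correct and follows exactly the route the paper intends: the paper omits the proof of \cref{pro:energy2} as ``analogous to the one of \cref{pro:energy}'', and your argument is precisely that constant-step proof transferred to variable steps (test with $v=d_t u^n$, split off $\nabla u^{n-1/2}$, telescope, and add the implicit Euler identity for the first step). Your closing remark about the $1/2$ versus $\theta-1/2$ coefficients correctly identifies the only bookkeeping subtlety.
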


The proof is analogous to the one of \cref{pro:energy}
and is therefore omitted.

We now study the constraint violation properties.
In the case of variable step sizes, the second difference quotient $d_t^2 u^n$
is defined by
\begin{equation}\label{eq:CN-diff-quot-vart}
d_t^2 u^n := \frac{1}{\tau_n} (d_t u^n - d_tu^{n-1}),\quad n\geqslant 2.
\end{equation}
We have the following analogue of \cref{Le:orthog-time-deriv}.

\begin{lemma}[discrete chain rule]\label{Le:orthog-time-deriv-variable}
For a sequence $(v^n)_{n \geqslant 0}$ and $n \geqslant 2$ we have
\[2 d_t v^n \cdot {\widehat v}^{n-1+\mu}=d_t|v^n|^2- \mu\tau_n^2 d_t|d_t v^n|^2- \mu \tau_n^3 |d_t^2 v^n|^2
- \left(1-2\mu \right)\tau_n|d_tv^n|^2  .\]
\end{lemma}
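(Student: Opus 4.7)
The plan is to mirror the proof of \cref{Le:orthog-time-deriv} (the constant step-size case), checking that each algebraic manipulation carries over verbatim once the fixed step size $\tau$ is replaced by $\tau_n$ and the appropriate difference quotients from~\eqref{eq:CN-diff-quot-vart} are used. The statement is purely algebraic, so no analytic tools are required.

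First, I would introduce the arithmetic mean $v^{n-1/2}:=(v^n+v^{n-1})/2$ and write
\[
2 d_t v^n \cdot \widehat v^{n-1+\mu}
= 2 d_t v^n \cdot v^{n-1/2} - 2 d_t v^n \cdot \bigl(v^{n-1/2} - \widehat v^{n-1+\mu}\bigr).
\]
The first term evaluates telescopically to $d_t|v^n|^2$ since $2 d_t v^n \cdot v^{n-1/2} = (|v^n|^2-|v^{n-1}|^2)/\tau_n$. For the second, I would use the definitions $\widehat v^{n-1+\mu}=v^{n-1}+\mu\tau_n d_tv^{n-1}$ and $v^{n-1/2}=v^{n-1}+(\tau_n/2)d_tv^n$ to rewrite
\[
v^{n-1/2} - \widehat v^{n-1+\mu}
= \tfrac{\tau_n}{2} d_t v^n - \mu\tau_n d_t v^{n-1}
= \mu\tau_n\bigl(d_t v^n - d_t v^{n-1}\bigr) + \bigl(\tfrac12-\mu\bigr)\tau_n d_t v^n.
\]
Substituting this yields
\[
2 d_t v^n \cdot \widehat v^{n-1+\mu}
= d_t|v^n|^2 - 2\mu\tau_n d_t v^n \cdot (d_t v^n - d_t v^{n-1}) - (1-2\mu)\tau_n |d_t v^n|^2.
\]

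The final step is the discrete product rule for $d_t v^n \cdot (d_t v^n - d_t v^{n-1})$. Using the polarization identity $a \cdot (a-b) = (|a|^2-|b|^2)/2 + |a-b|^2/2$ with $a=d_t v^n$, $b=d_t v^{n-1}$, and recalling that $d_t v^n - d_t v^{n-1} = \tau_n d_t^2 v^n$ by~\eqref{eq:CN-diff-quot-vart} and that $(|d_t v^n|^2-|d_t v^{n-1}|^2)/\tau_n = d_t|d_t v^n|^2$, I obtain
\[
d_t v^n \cdot (d_t v^n - d_t v^{n-1})
= \tfrac{\tau_n}{2} d_t|d_t v^n|^2 + \tfrac{\tau_n^2}{2}|d_t^2 v^n|^2.
\]
Plugging this into the previous display gives exactly the asserted identity.

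There is no real obstacle here; the only subtlety to keep in mind is that $d_t$ and $d_t^2$ are now defined with the variable step sizes $\tau_n$, so the telescoping constants match those in the claimed identity. Since all coefficients of $\mu$, $\tau_n$, $\tau_n^2$, and $\tau_n^3$ arise from this same algebra as in \cref{Le:orthog-time-deriv}, the proof is a one-to-one translation.
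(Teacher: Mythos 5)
Your proof is correct and takes essentially the same route as the paper's: the identical splitting $2 d_t v^n \cdot \widehat v^{n-1+\mu} = 2 d_t v^n \cdot v^{n-1/2} - 2 d_t v^n \cdot (v^{n-1/2} - \widehat v^{n-1+\mu})$, the same three-line rewriting of $v^{n-1/2}-\widehat v^{n-1+\mu}$, and the same polarization identity carried over from \cref{Le:orthog-time-deriv} with $\tau$ replaced by $\tau_n$ and $d_tv^n-d_tv^{n-1}=\tau_n d_t^2 v^n$. All algebraic steps check out, so nothing further is needed.
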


We omit the proof as it follows along the lines of that of \cref{Le:orthog-time-deriv}.
\Cref{Le:orthog-time-deriv-variable} implies that,
if $d_t u^n \cdot {\widehat u}^{n-1+\mu} = 0$,
for $n\geqslant 2$ we have
\begin{equation}\label{eq:orthog-time-deriv2-variable}
d_t|u^n|^2=\mu \tau_n^2 d_t|d_t u^n|^2+\mu \tau_n^3 |d_t^2 u^n|^2 
+(1-2\mu)\tau_n|d_tu^n|^2 .
\end{equation}

This relation is the crucial ingredient for the identity established in the following proposition.

\begin{proposition}[constraint violation error]
For the $(\theta,\mu)$-method \eqref{eq:CNa-vart} and $m\geqslant 2$,
if $|u^0| =1$ and $u^1$ is computed by the linearly implicit Euler method~\eqref{eq:ie_var_tau},
we have 
\begin{equation}\label{eq:CN-constr-viol-7-vart}
\begin{aligned}
|u^m|^2 -1 
& =
\mu \tau_m^2|d_t u^m|^2 + (1-\mu)\tau_1^2|d_t u^1|^2 
+\mu\sum_{n=1}^{m-1} \tau_n^2(1-s_{n+1}^2)|d_t u^n|^2   \\
&\quad+\mu \sum_{n=2}^m \tau_n^4 |d_t^2 u^n|^2
+(1-2\mu)\sum_{n=2}^m\tau_n^2|d_tu^n|^2.
\end{aligned}
\end{equation}
Thus, we have the constraint violation estimate
\begin{equation}\label{eq:CN-constr-viol-a-vart}
\begin{aligned}
 \| |u^m|^2 -1\|_{L^1} 
& \leqslant \mu\tau_m^2\|d_t u^m\|^2+ (1-\mu)\tau_1^2\|d_t u^1\|^2
+\mu\sum_{n=1}^{m-1} \tau_n^2|1-s_{n+1}^2|\|d_t u^n\|^2 \\
&\quad + \mu\sum_{n=2}^m \tau_n^4 \|d_t^2 u^n\|^2 
+ |1-2\mu|\sum_{n=2}^m \tau_n^2\|d_tu^n\|^2 .
\end{aligned}
\end{equation}
\end{proposition}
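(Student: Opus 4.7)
The strategy is to mirror the constant step-size argument of \cref{prp:Contraint violation}, but to track carefully how the variable step sizes interact with the discrete differentiation rules. The starting point is identity \eqref{eq:orthog-time-deriv2-variable}, which already encodes \cref{Le:orthog-time-deriv-variable} together with the orthogonality $d_t u^n\cdot\widehat u^{n-1+\mu}=0$ built into scheme \eqref{eq:CNa-vart}. Multiplying that identity by $\tau_n$ converts the term $\mu\tau_n^2\,d_t|d_tu^n|^2$ into the plain difference $\mu\tau_n^2\bigl(|d_tu^n|^2-|d_tu^{n-1}|^2\bigr)$ and, upon summation over $n=2,\dots,m$, telescopes the left-hand side to $|u^m|^2-|u^1|^2$, while the remaining contributions collect directly into $\mu\sum_{n=2}^m\tau_n^4|d_t^2u^n|^2$ and $(1-2\mu)\sum_{n=2}^m\tau_n^2|d_tu^n|^2$.

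The main obstacle, and the essential new ingredient compared with the uniform-step case, is the sum $\sum_{n=2}^m\tau_n^2\bigl(|d_tu^n|^2-|d_tu^{n-1}|^2\bigr)$, which no longer telescopes cleanly because the weight $\tau_n^2$ varies with $n$. I would handle it by summation by parts: re-indexing $n\mapsto n+1$ in the second piece, using $\tau_n^2-\tau_{n+1}^2=\tau_n^2(1-s_{n+1}^2)$, and rewriting the boundary contribution via $\tau_2^2=\tau_1^2 s_2^2$ as $-\tau_2^2|d_tu^1|^2=-\tau_1^2|d_tu^1|^2+\tau_1^2(1-s_2^2)|d_tu^1|^2$. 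This yields
\[
\sum_{n=2}^m \tau_n^2\bigl(|d_tu^n|^2-|d_tu^{n-1}|^2\bigr)
=\tau_m^2|d_tu^m|^2-\tau_1^2|d_tu^1|^2+\sum_{n=1}^{m-1}\tau_n^2(1-s_{n+1}^2)|d_tu^n|^2,
\]
which is precisely the ``step-size variation'' residual appearing in \eqref{eq:CN-constr-viol-7-vart}. When all $\tau_n$ are equal, $s_{n+1}\equiv 1$, the residual vanishes, and one recovers \eqref{eq:CN-constr-viol-5}.

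To finish, I would eliminate $|u^1|^2$ from the telescoped equation. Since $u^1=u^0+\tau_1 d_tu^1$, $|u^0|=1$, and the orthogonality $d_tu^1\cdot u^0=0$ is enforced by the first step \eqref{eq:ie_var_tau}, expanding the square gives $|u^1|^2=1+\tau_1^2|d_tu^1|^2$. Substituting this identity and combining the two $|d_tu^1|^2$ contributions produces the coefficient $(1-\mu)\tau_1^2$ and completes the pointwise identity \eqref{eq:CN-constr-viol-7-vart}. The $L^1$ estimate \eqref{eq:CN-constr-viol-a-vart} then follows by integrating over $\varOmega$, recalling $\|\cdot\|^2=\int_\varOmega|\cdot|^2\,\mathrm dx$, and applying the triangle inequality to the possibly signed coefficients $1-s_{n+1}^2$ and $1-2\mu$.
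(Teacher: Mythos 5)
Your proposal is correct and follows essentially the same route as the paper's proof: multiplying \eqref{eq:orthog-time-deriv2-variable} by $\tau_n$, telescoping over $n=2,\dots,m$, shifting indices with $\tau_{n+1}^2 = s_{n+1}^2\tau_n^2$, eliminating $|u^1|^2$ via $|u^1|^2 = 1+\tau_1^2|d_t u^1|^2$ from the Euler starting step, and integrating over $\varOmega$. The only cosmetic difference is that you absorb the boundary term $-\tau_2^2|d_t u^1|^2$ into the sum starting at $n=1$ already during summation by parts, whereas the paper keeps $-\tau_2^2|d_t u^1|^2$ with a sum starting at $n=2$ and performs this regrouping only in the final step.
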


\begin{proof}
We multiply \eqref{eq:orthog-time-deriv2-variable} by $\tau_n$ and write it in the form
\begin{equation*}
|u^n|^2- |u^{n-1}|^2=\mu \tau_n^2\big (|d_t u^n|^2-|d_t u^{n-1}|^2\big )+\mu\tau_n^4 |d_t^2 u^n|^2 +\left(1-2\mu\right)\tau_n^2 |d_tu^n|^2 .
\end{equation*}
Summing over $n$ from $n=2$ to $n=m$, we obtain
\begin{equation*}
\begin{split}
|u^m|^2-|u^1|^2
& = \mu \sum_{n=2}^{m} \tau_n^2\big (|d_t u^n|^2-|d_t u^{n-1}|^2\big ) \\
& \qquad +\mu \sum_{n=2}^{m} \tau_n^4 |d_t^2 u^n|^2
+(1-2\mu)\sum_{n=2}^{m} \tau_n^2 |d_tu^n|^2.
\end{split}
\end{equation*}
Shifting the indices in the sum and using the definition of the step size ratio ($s_{n+1} = \tau_{n+1}/\tau_n$),
the first term on the right-hand side can be rewritten as
\begin{equation*}
\sum_{n=2}^m \tau_n^2\big (|d_t u^n|^2-|d_t u^{n-1}|^2\big )
=\tau_m^2|d_t u^m|^2-\tau_2^2|d_t u^1|^2
+\sum_{n=2}^{m-1} \tau_n^2(1-s_{n+1}^2)|d_t u^n|^2.
\end{equation*}
We thus obtain
\begin{equation*}
\begin{split}
|u^m|^2&-|u^1|^2\\ 
&=\mu \Big (\tau_m^2|d_t u^m|^2-\tau_2^2|d_t u^1|^2
+\sum_{n=2}^{m-1} \tau_n^2(1-s_{n+1}^2)|d_t u^n|^2
+ \sum_{n=2}^m \tau_n^4 |d_t^2 u^n|^2\Big )\\
&\quad 
+(1-2\mu)\sum_{n=2}^m \tau_n^2|d_tu^n|^2 .
\end{split}
\end{equation*}
If $|u^0| =1$ and $u^1$ is computed by the implicit Euler method,
then $|u^1|^2 =|u^0|^2+ \tau_1^2|d_t u^1|^2=1+ \tau_1^2|d_t u^1|^2,$
which yields \eqref{eq:CN-constr-viol-7-vart}.
Taking the $L^1$ norm, we obtain the constraint violation estimate
\eqref{eq:CN-constr-viol-a-vart}. 
\end{proof}

\begin{remark}
(i)
Notice that in the case of constant step size, we have $s_n=1$ for all $n \geqslant 2$,
and the bound in the constraint violation estimate \eqref{eq:CN-constr-viol-a-vart} 
reduces to the expression on the right-hand side of~\eqref{eq:CN-constr-viol-a}.\\
(ii)
If $0 \leqslant \mu \leqslant 1/2$ and if the step size monotonically decreases,
i.e., if $s_n\leqslant 1$ for all $2 \leqslant n \leqslant m$,
it follows from \eqref{eq:CN-constr-viol-7-vart}
that $|u^m|\geqslant 1$.\\
(iii)
From~\eqref{eq:CN-constr-viol-7-vart},
for $\mu=0$,
we obtain the constraint violation identity
\begin{equation*}
|u^m|^2 -1 
= \sum_{n=1}^m\tau_n^2|d_tu^n|^2
\end{equation*}
of the implicit Euler method.
For $\mu=1/2$,
we obtain the constraint violation identity
\begin{equation*}
|u^m|^2 -1 
=
\frac{1}{2} \left(
\tau_1^2|d_t u^1|^2 
+ \tau_m^2|d_t u^m|^2
+ \sum_{n=1}^{m-1} \tau_n^2(1-s_{n+1}^2)|d_t u^n|^2
+ \sum_{n=2}^m \tau_n^4 |d_t^2 u^n|^2 \right)
\end{equation*}
of the midpoint and modified Euler methods.
\end{remark}

\section{Numerical experiments}\label{Se:num}
 
We illustrate the accuracy in approximating the unit-length constraint and the overall performance 
of the proposed $(\theta,\mu)$-method (\cref{alg})
through a series of numerical experiments in two dimensions,
comparing it with the BDF2 method (\cref{alg:bdf2_iter}).
Moreover, for the $(\theta,\mu)$-method, we present numerical results obtained using variable step sizes.
 
In all computations,
the domain of the problem is $\varOmega = (-1/2,1/2)^2$ with
$\varGamma_\DD = \partial\varOmega$,
and the vector fields attain values in $\R^3$.
For the spatial discretization,
we consider a fixed unstructured triangular mesh $\mathcal{T}_h$ of $\varOmega$ generated by Netgen~\cite{netgen}
(consisting of 4901 vertices and 9544 triangles, and having mesh size 
$h \approx \num{2.286529e-02}$).
For the approximation of vector-valued functions,
we use $H^1$-conforming first-order finite elements,
i.e.,
we consider the space $V_h \subset H^1(\varOmega; \R^3)$ of 
vector-valued $\mathcal{T}_h$-piecewise affine and globally continuous functions.
The pointwise orthogonality imposed in the variational formulation of the methods
is enforced only at the vertices of $\mathcal{T}_h$.
For all methods, the solution of the arising constrained linear system is based on the null-space method given
in~\cite{RG,KPPRS}.
Moreover, we consider
a fixed tolerance $\veps_{\text{stop}} = 10^{-6}$
or a fixed final time $T=1$
in the stopping criterion of step~(2).

\subsection{Constant step size}

We start with a collection of numerical experiments to assess the performance of the schemes
in the case of constant step size.

\subsubsection{Comparison of \cref{alg,alg:bdf2_iter} ($H^1$-gradient flow)} \label{sec:stereo}

We consider the model problem from~\cite[Section~4.1]{ABP}.
Let $u_\DD = \pi_{\mathrm{st}}^{-1} \vert_{\varGamma_\DD}$,
where
\begin{equation} \label{eq:invstereo}
\pi_{\mathrm{st}}^{-1}(x) = (|x|^2+1)^{-1} (2x, 1 - |x|^2)^{\top}
\end{equation}
denotes the inverse stereographic projection.
It is well known that $u = \pi_{\mathrm{st}}^{-1}$ is a harmonic map with $u\vert_{\varGamma_\DD} = u_\DD$.

The discrete Dirichlet data are obtained interpolating the exact solution on the boundary.
To initialize the iterative algorithms,
we consider a fixed initial guess $u_h^0 = \mathcal{I}_hu^0$,
where $\mathcal{I}_h: C(\overline{\varOmega}; \R^3) \to V_h$
denotes the nodal interpolation operator (or its scalar-valued counterpart) and
\begin{equation} \label{eq:invstereo_perturbed}
u^0(x) = \left([u_1(x) + \varphi(x)]^2 + [u_2(x) - \varphi(x)]^2+u_3(x)^2\right)^{-1/2}
\begin{pmatrix} u_1(x) + \varphi(x) \\ u_2(x) - \varphi(x) \\ u_3(x) \end{pmatrix}
\end{equation}
with
\begin{equation*}
\varphi(x) = 16 \sin(4 \pi x_1) (x_1^2 - 1/4) (x_2^2 - 1/4).
\end{equation*}
 
In our first numerical experiment,
we compare the proposed $(\theta,\mu)$-method
and the
BDF2 method for the case of the $H^1$-gradient flow,
i.e., $(\cdot,\cdot)_\star = (\nabla\cdot,\nabla\cdot)$,
considering the step sizes $\tau = 2^{-m}$ for $m=4,\dotsc,10$.
For the $(\theta,\mu)$-method,
we consider the relevant cases of the
implicit Euler method ($\theta=1$, $\mu=0$)
and the midpoint method ($\theta=1/2$, $\mu=1/2$).
The results are displayed in \cref{tab:invstereo_H1}.
We compare the algorithms and test their convergence with respect to $\tau$
by assessing the following quantities:
\begin{enumerate}[\small $\bullet$]\itemsep=0pt
\item $N_\stop$, the number of iterations required to meet the tolerance;
\item $\delta_\infty[u_h^{N_\stop}] := \| |u_h^{N_\stop}| - 1\|_{L^\infty}$, the constraint violation error,
measured using the $L^\infty$-norm
(not unconditionally controlled by the algorithms);
\item $\delta_\uni[u_h^{N_\stop}] := \| \mathcal{I}_h|u_h^{N_\stop}|^2 -1 \|_{L^1}$,
the constraint violation error, measured using the $L^1$-norm
(this is the quantity that should decay linearly for the implicit Euler method
and, under a sharp discrete regularity condition, quadratically for the BDF2 method
and the midpoint method);
\item $A^2$, $B^2$, and $C^2$,
the quantities that, if uniformly bounded with respect to $\tau$,
guarantee the second-order convergence of $\delta_\uni[u_h^{N_\stop}]$ as $\tau \to 0$
for the BDF2 method (see~\eqref{eq:mild_reg}) and the midpoint method (see~\eqref{eq:mild_reg-CN}).
Specifically, we have
\[
A^2 = \tau^2 \sum_{n=2}^{N_\stop} \|d_t^2 u_h^n\|^2,
\quad B^2 = \|d_t u_h^1\|^2, \quad \text{and} \quad C^2 = \|d_t u_h^{N_\stop}\|^2.
\]
Note that $C^2$ is relevant only for the midpoint method,
and that none of these quantities plays a role in the analysis
of the implicit Euler method;
\item $\delta_\ener[u_h^{N_\stop}] = | I[u_h^{N_\stop}] - I[u] |$,
the energy approximation error.
\end{enumerate}
Finally,
$\mathrm{eoc}_\infty$ and $\mathrm{eoc}_\uni$
denote the experimental rates $q$ of the convergences of
$\delta_\infty[u_h^{N_\stop}]$ and $\delta_\uni[u_h^{N_\stop}]$
as $\tau \to 0$, respectively,
computed as logarithmic slopes $$q = -\log(\mathrm{err}_{k+1}/\mathrm{err}_k)/\log2$$
for any two consecutive instances of the errors.

\begin{table}[ht]
\begin{center}
\sisetup{output-exponent-marker=\ensuremath{\mathrm{e}}}
\tiny\setlength{\tabcolsep}{3pt}
\begin{tabular}{ l r c c c c c }
\hline
\multicolumn{7}{c}{Implicit Euler method}\\ \hline
$\tau$ & $N_\stop$
& $\delta_\infty[u_h^{N_\stop}]$ & $\mathrm{eoc}_\infty$
& $\delta_\uni[u_h^{N_\stop}]$ & $\mathrm{eoc}_\uni$
& $\delta_\ener[u_h^{N_\stop}]$\\ \hline
$2^{-4}$ & 274 & \num{1.067814e-02} & --- & \num{4.789695e-03} & --- & \num{1.757081e-02} \\
$2^{-5}$ & 535 & \num{5.445057e-03} & \num{0.971641312697510} & \num{2.435925e-03} & \num{0.975464075799131} & \num{8.419817e-03} \\
$2^{-6}$ & 1057 & \num{2.749793e-03} & \num{0.985624132496249} & \num{1.228481e-03} & \num{0.987594169825975} & \num{4.007746e-03} \\
$2^{-7}$ & 2101 & \num{1.381812e-03} & \num{0.992761673217135} & \num{6.169022e-04} & \num{0.993761848446522} & \num{1.846426e-03} \\
$2^{-8}$ & 4190 & \num{6.926474e-04} & \num{0.996368321640701} & \num{3.091205e-04} & \num{0.996872458388359} & \num{7.774544e-04} \\
$2^{-9}$ & 8368 & \num{3.467607e-04} & \num{0.998180718491386} & \num{1.547281e-04} & \num{0.998434105995996} & \num{2.459567e-04} \\
$2^{-10}$ & 16723 & \num{1.734898e-04} & \num{0.999089555688400} & \num{7.740610e-05} & \num{0.999216059066143} & \num{1.903686e-05} \\
\hline
\end{tabular}

\bigskip

\begin{tabular}{ l r c c c c c c c}
\hline
\multicolumn{9}{c}{BDF2 method}\\ \hline
$\tau$ & $N_\stop$
& $\delta_\infty[u_h^{N_\stop}]$ & $\mathrm{eoc}_\infty$
& $\delta_\uni[u_h^{N_\stop}]$ & $\mathrm{eoc}_\uni$
& $\delta_\ener[u_h^{N_\stop}]$
& $A^2$ & $B^2$\\ \hline
$2^{-4}$ & 262 & \num{1.449420e-03} & --- & \num{6.914510e-04} & --- & \num{2.106153e-03} & \num{3.771055e-03} & \num{1.134456e-01} \\
$2^{-5}$ & 523 & \num{3.777684e-04} & \num{1.939901775084087} & \num{1.805268e-04} & \num{1.937413990280181} & \num{3.341723e-04} & \num{1.988483e-03} & \num{1.204253e-01} \\
$2^{-6}$ & 1046 & \num{9.649027e-05} & \num{1.969046651942153} & \num{4.615075e-05} & \num{1.967787028914623} & \num{1.260331e-04} & \num{1.018802e-03} & \num{1.241592e-01} \\
$2^{-7}$ & 2090 & \num{2.438743e-05} & \num{1.984245737698221} & \num{1.166939e-05} & \num{1.983624945149232} & \num{2.437293e-04} & \num{5.153281e-04} & \num{1.260916e-01} \\
$2^{-8}$ & 4179 & \num{6.130565e-06} & \num{1.992045786218998} & \num{2.934096e-06} & \num{1.991741168182128} & \num{2.735198e-04} & \num{2.591147e-04} & \num{1.270748e-01} \\
$2^{-9}$ & 8357 & \num{1.536893e-06} & \num{1.996003313816719} & \num{7.356363e-07} & \num{1.995851498526484} & \num{2.810156e-04} & \num{1.299159e-04} & \num{1.275707e-01} \\
$2^{-10}$ & 16712 & \num{3.847574e-07} & \num{1.997995747539211} & \num{1.841743e-07} & \num{1.997920912030096} & \num{2.828958e-04} & \num{6.504700e-05} & \num{1.278197e-01} \\
\hline
\end{tabular}

\bigskip

\begin{tabular}{ l r c c c c c c c c}
\hline
\multicolumn{10}{c}{midpoint method}\\ \hline
$\tau$ & $N_\stop$
& $\delta_\infty[u_h^{N_\stop}]$ & $\mathrm{eoc}_\infty$
& $\delta_\uni[u_h^{N_\stop}]$ & $\mathrm{eoc}_\uni$
& $\delta_\ener[u_h^{N_\stop}]$
& $A^2$ & $B^2$ & $C^2$\\ \hline
$2^{-4}$ & 263 & \num{4.840786e-04} & --- & \num{2.307523e-04} & --- & \num{5.068923e-04} & \num{3.907769e-03} & \num{1.134456e-01} & \num{7.337834e-15} \\
$2^{-5}$ & 524 & \num{1.259806e-04} & \num{1.942039729996901} & \num{6.019202e-05} & \num{1.938700889407611} & \num{7.805042e-05} & \num{2.021934e-03} & \num{1.204253e-01} & \num{7.966500e-15} \\
$2^{-6}$ & 1046 & \num{3.216698e-05} & \num{1.969549187487585} & \num{1.538459e-05} & \num{1.968086237040558} & \num{2.310549e-04} & \num{1.026998e-03} & \num{1.241592e-01} & \num{8.306622e-15} \\
$2^{-7}$ & 2090 & \num{8.129365e-06} & \num{1.984365924597209} & \num{3.889857e-06} & \num{1.983696971753057} & \num{2.702607e-04} & \num{5.173513e-04} & \num{1.260916e-01} & \num{8.484000e-15} \\
$2^{-8}$ & 4179 & \num{2.043535e-06} & \num{1.992075712202860} & \num{9.780359e-07} & \num{1.991757792289502} & \num{2.801892e-04} & \num{2.596170e-04} & \num{1.270748e-01} & \num{8.513727e-15} \\
$2^{-9}$ & 8357 & \num{5.122987e-07} & \num{1.996009816920948} & \num{2.452123e-07} & \num{1.995856074714085} & \num{2.826877e-04} & \num{1.300409e-04} & \num{1.275707e-01} & \num{8.528770e-15} \\
$2^{-10}$ & 16712 & \num{1.282525e-07} & \num{1.997998282571460} & \num{6.139145e-08} & \num{1.997921697055550} & \num{2.833144e-04} & \num{6.507821e-05} & \num{1.278197e-01} & \num{8.551566e-15} \\
\hline
\end{tabular}
\end{center}
\footnotesize
\caption{Comparison of the implicit Euler, BDF2 and midpoint methods ($H^1$-gradient flow).}
\label{tab:invstereo_H1}
\end{table}

Looking at \cref{tab:invstereo_H1},
we see that $N_\stop \propto \tau^{-1}$ for all algorithms.
For any fixed $\tau$, the algorithms require approximately the same number of iterations to satisfy the stopping criterion.
In particular, for the BDF2 and the midpoint methods, the number of iterations is almost identical.
For the implicit Euler method, we see a clear first-order convergence of the $L^1$-error (as expected from the theory).
We also see a first-order convergence of the $L^\infty$-error.
For the BDF2 method and the midpoint method,
we observe a second-order convergence of the constraint violation error measured in both the $L^1$-norm and the $L^\infty$-norm.
The midpoint method is slightly more accurate, in the sense that for each value of $\tau$
the constraint violation error is about $1/3$ of the one of the BDF2 method.
The quantities $A^2$, $B^2$ (and $C^2$ for the midpoint method) stay uniformly bounded
as $\tau \to 0$, which is a numerical validation of the discrete regularity property.
For both methods,
the quantity $A^2$ converges linearly to $0$ as $\tau \to 0$.
The energy approximation error decays for the implicit Euler method,
whereas it stabilizes at about $2.8 \cdot 10^{-4}$
for both the BDF method and the midpoint method.
We believe that the lack of convergence of the energy error as $\tau \to 0$
for the higher-order methods
is due to the spatial approximation error becoming predominant.

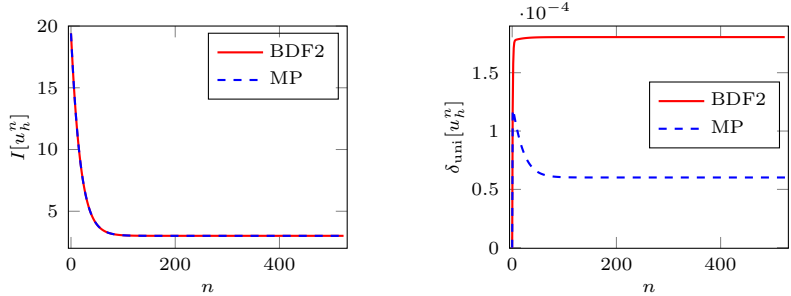
\begin{figure}[htbp]
\centering
\begin{subfigure}{0.35\textwidth}
\centering
\begin{tikzpicture}
\pgfplotstableread{pics/stereo_bdf2_energies.dat}{\energyBDF}
\pgfplotstableread{pics/stereo_cn_energies.dat}{\energyCN}
\begin{axis}
[
width = \textwidth,
xlabel = {\tiny $n$},
ylabel = {\tiny $I[u_h^n]$},
xmin = -5,
xmax = 530,
ymin = 2,
ymax = 20,
legend pos= north east,
legend cell align= left,
legend style = {fill=none},
]
\addplot[red, thick] table[x=iter, y=energy]{\energyBDF};
\addplot[blue, dashed, thick] table[x=iter, y=energy]{\energyCN};
\legend{
BDF2,
MP,
}

\end{axis}
\end{tikzpicture}
\end{subfigure}
\quad
\begin{subfigure}{0.35\textwidth}
\centering
\begin{tikzpicture}
\pgfplotstableread{pics/stereo_bdf2_L1error.dat}{\errorBDF}
\pgfplotstableread{pics/stereo_cn_L1error.dat}{\errorCN}
\begin{axis}
[
width = \textwidth,
xlabel = {\tiny $n$},
ylabel = {\tiny $\delta_\uni[u_h^{n}]$},
xmin = -5,
xmax = 530,
ymin = -0.5e-6,
ymax = 1.9e-4,
legend cell align= left,
legend style = {at={(0.97,0.6)},anchor=east,fill=none},
]
\addplot[red, thick] table[x=iter, y=L1error]{\errorBDF};
\addplot[blue, dashed, thick] table[x=iter, y=L1error]{\errorCN};
\legend{
BDF2,
MP,
}
\end{axis}
\end{tikzpicture}
\end{subfigure}
\caption{Evolution of the energy (left)
and the constraint violation $L^1$-error (right)
for the approximations generated
by the BDF2 and the midpoint (MP) methods
($H^1$-gradient flow, $\tau = 2^{-5}$).
}
\label{fig:plots_stereo}
\end{figure}

In \cref{fig:plots_stereo}, for both the BDF2 method and the midpoint method,
we show the evolution of the energy
and the constraint violation $L^1$-error during the iteration
for $\tau = 2^{-5}$.
On the one hand, we see that the energetic behavior of the methods is very similar,
as the curves are nearly superimposed
(for $\tau = 2^{-5}$ the difference between the energy values obtained with the two schemes is of the order of 0.01\%).
For both methods, the energy monotonically decays in an exponential fashion
and converges to the approximate energy value of the inverse stereographic projection.
On the other hand, the behavior of the schemes is clearly different as far as the evolution of the constraint violation error is concerned.
While the error is monotonically increasing for the BDF2 method (as expected from the theory, see~\eqref{eq:BDF2-constr-viol-a}),
for the midpoint method it reaches its maximal value in the very first iteration,
and then decays monotonically until it stabilizes to its final value.
For the sake of readability, in the plots we have omitted the results for the implicit Euler method.
However, we report that the energetic behavior is very similar,
whereas the constraint violation error, in agreement with~\eqref{eq:iE-constr-viol},
has the same monotonically increasing behavior of the BDF2 method
(but the value is one order of magnitude larger).

\subsubsection{Comparison of \cref{alg,alg:bdf2_iter} ($L^2$-gradient flow)} \label{sec:stereo2}

We repeat the experiment of \cref{sec:stereo}
for the implicit Euler, midpoint, and BDF2 methods, this time focusing on the $L^2$-gradient flow
(i.e., $(\cdot,\cdot)_\star = (\cdot,\cdot)$) instead of the $H^1$-gradient flow. 
The spatial discretization and the stopping tolerance are the same as before, while
we consider the values $\tau = 2^{-m}$ for $m=10,\dotsc,16$ for the step size.

\begin{table}[ht]
\begin{center}
\sisetup{output-exponent-marker=\ensuremath{\mathrm{e}}}
\tiny\setlength{\tabcolsep}{3pt}
\begin{tabular}{ l r c c c c c }
\hline
\multicolumn{7}{c}{Implicit Euler method}\\ \hline
$\tau$ & $N_\stop$
& $\delta_\infty[u_h^{N_\stop}]$ & $\mathrm{eoc}_\infty$
& $\delta_\uni[u_h^{N_\stop}]$ & $\mathrm{eoc}_\uni$
& $\delta_\ener[u_h^{N_\stop}]$\\ \hline
$2^{-10}$ & 358 & \num{2.717896e-02} & --- & \num{1.418881e-02} & --- & \num{6.181589e-02} \\
$2^{-11}$ & 704 & \num{1.429713e-02} & \num{0.926764682531987} & \num{7.534582e-03} & \num{0.913154215216359} & \num{2.920025e-02} \\
$2^{-12}$ & 1394 & \num{7.338287e-03} & \num{0.962210335531921} & \num{3.894810e-03} & \num{0.951974525682967} & \num{1.395834e-02} \\
$2^{-13}$ & 2776 & \num{3.718566e-03} & \num{0.980696950967261} & \num{1.982461e-03} & \num{0.974260466438524} & \num{6.696962e-03} \\
$2^{-14}$ & 5538 & \num{1.871943e-03} & \num{0.990209872407960} & \num{1.000518e-03} & \num{0.986545362220900} & \num{3.169622e-03} \\
$2^{-15}$ & 11063 & \num{9.391799e-04} & \num{0.995063068202221} & \num{5.026608e-04} & \num{0.993090032576087} & \num{1.433537e-03} \\
$2^{-16}$ & 22113 & \num{4.703980e-04} & \num{0.997519606469530} & \num{2.519426e-04} & \num{0.996490102430804} & \num{5.726016e-04} \\
\hline
\end{tabular}

\bigskip

\begin{tabular}{ l r c c c c c c c}
\hline
\multicolumn{9}{c}{BDF2 method}\\ \hline
$\tau$ & $N_\stop$
& $\delta_\infty[u_h^{N_\stop}]$ & $\mathrm{eoc}_\infty$
& $\delta_\uni[u_h^{N_\stop}]$ & $\mathrm{eoc}_\uni$
& $\delta_\ener[u_h^{N_\stop}]$
& $A^2$ & $B^2$\\ \hline
$2^{-10}$ & 347 & \num{1.609006e-02} & --- & \num{7.326965e-03} & --- & \num{3.100461e-02} & \num{6.151266e+02} & \num{4.441482e+03} \\
$2^{-11}$ & 691 & \num{5.911905e-03} & \num{1.444474714169849} & \num{2.318522e-03} & \num{1.660010307891790} & \num{8.392529e-03} & \num{5.268518e+02} & \num{5.857205e+03} \\
$2^{-12}$ & 1382 & \num{2.111226e-03} & \num{1.485542063765896} & \num{6.787015e-04} & \num{1.772356309482390} & \num{2.096575e-03} & \num{3.977257e+02} & \num{7.058192e+03} \\
$2^{-13}$ & 2763 & \num{7.092779e-04} & \num{1.573658121583390} & \num{1.875916e-04} & \num{1.855181973510716} & \num{3.567499e-04} & \num{2.692145e+02} & \num{7.954286e+03} \\
$2^{-14}$ & 5526 & \num{2.237489e-04} & \num{1.664470406371208} & \num{4.982835e-05} & \num{1.912556521137424} & \num{1.150726e-04} & \num{1.669773e+02} & \num{8.551995e+03} \\
$2^{-15}$ & 11051 & \num{6.861735e-05} & \num{1.705235274535649} & \num{1.290337e-05} & \num{1.949218895344054} & \num{2.400249e-04} & \num{9.725875e+01} & \num{8.917550e+03} \\
$2^{-16}$ & 22101 & \num{2.067605e-05} & \num{1.730612813924130} & \num{3.291053e-06} & \num{1.971126741217712} & \num{2.724355e-04} & \num{5.438053e+01} & \num{9.128111e+03} \\
\hline
\end{tabular}

\bigskip

\begin{tabular}{ l r c c c c c c c c}
\hline
\multicolumn{10}{c}{midpoint method}\\ \hline
$\tau$ & $N_\stop$
& $\delta_\infty[u_h^{N_\stop}]$ & $\mathrm{eoc}_\infty$
& $\delta_\uni[u_h^{N_\stop}]$ & $\mathrm{eoc}_\uni$
& $\delta_\ener[u_h^{N_\stop}]$
& $A^2$ & $B^2$ & $C^2$\\ \hline
$2^{-10}$ & 347 & \num{5.513829e-03} & --- & \num{2.490751e-03} & --- & \num{8.887947e-03} & \num{7.141146e+02} & \num{4.441482e+03} & \num{9.240650e-13} \\
$2^{-11}$ & 692 & \num{2.013096e-03} & \num{1.453638553072969} & \num{7.797516e-04} & \num{1.675494289750504} & \num{2.438945e-03} & \num{5.826272e+02} & \num{5.857205e+03} & \num{9.678072e-13} \\
$2^{-12}$ & 1382 & \num{7.107345e-04} & \num{1.502033336314724} & \num{2.271200e-04} & \num{1.779559853744712} & \num{4.909651e-04} & \num{4.258449e+02} & \num{7.058192e+03} & \num{9.953261e-13} \\
$2^{-13}$ & 2764 & \num{2.371833e-04} & \num{1.583308297574130} & \num{6.263214e-05} & \num{1.858479675617902} & \num{7.186503e-05} & \num{2.817768e+02} & \num{7.954286e+03} & \num{9.896444e-13} \\
$2^{-14}$ & 5526 & \num{7.468650e-05} & \num{1.667083037220113} & \num{1.662029e-05} & \num{1.913957618176593} & \num{2.275157e-04} & \num{1.720716e+02} & \num{8.551995e+03} & \num{9.973991e-13} \\
$2^{-15}$ & 11051 & \num{2.291029e-05} & \num{1.704851770942925} & \num{4.302301e-06} & \num{1.949765187222118} & \num{2.690342e-04} & \num{9.923837e+01} & \num{8.917550e+03} & \num{9.987112e-13} \\
$2^{-16}$ & 22101 & \num{6.904625e-06} & \num{1.730360753584621} & \num{1.097158e-06} & \num{1.971337161984462} & \num{2.798282e-04} & \num{5.516288e+01} & \num{9.128111e+03} & \num{9.993687e-13} \\
\hline
\end{tabular}
\end{center}
\footnotesize
\caption{Comparison of the implicit Euler, BDF2 and midpoint methods ($L^2$-gradient flow).}
\label{tab:invstereo_L2}
\end{table}

The results of the simulations are displayed in \cref{tab:invstereo_L2}.
For the implicit Euler method,
the expected linear decay of the constraint violation error
(measured both with respect to the $L^1$-norm and the $L^\infty$-norm) is clearly visible.
However,
differently from what we observed for the $H^1$-gradient flow,
this behavior emerges only for the smallest step sizes
(there is a much longer preasymptotic phase than for the $H^1$-gradient flow).
The energy approximation error converges to $0$ linearly as $\tau \to 0$.
In terms of accuracy, the performance of the $H^1$-gradient flow is significantly better,
e.g., the smallest constraint violation error obtained with the $L^2$-gradient flow,
$\delta_\uni[u_h^{N_\stop}] = \num{2.519426e-04}$,
requires $\tau = 2^{-16}$ and 22113 iterations.
With the $H^1$-gradient flow,
the choice $\tau = 2^{-9}$ results only in 8368 iterations and leads to a smaller error
($\delta_\uni[u_h^{N_\stop}] = \num{1.547281e-04}$).

A long preasymptotic phase is observed also for the BDF2 method with $L^2$-gradient flow
(much longer than for the version with $H^1$-gradient flow).
Indeed, the quadratic convergence guaranteed by the method
is (almost) seen only for the smallest step sizes.
On the other hand, as we saw for the case of the $H^1$-gradient flow,
the convergence of the energy approximation is spoiled by the spatial approximation error.
A difference between the two considered gradient flow metrics
is visible also looking at the order of magnitude of $A^2$ and $B^2$,
with the values for the $L^2$-gradient flow being significantly larger than those for the $H^1$-gradient flow.
The quantity $A^2$ tends to $0$ as $\tau$ increases,
whereas we can observe a slight growth of $B^2$.

Similarly, in the case of the midpoint method,
we see the expected second-order convergence of the $L^1$-error
only when the step size becomes sufficiently small.
Again, the energy approximation error does not converge.
The quantity $A^2$ tends to $0$ as $\tau$ decreases,
whereas we can observe a slight growth of both $B^2$ and $C^2$.

\subsubsection{Modified implicit Euler method} \label{sec:modified}

We repeat the experiment for the modified implicit Euler method (\cref{alg} with $\theta=1$ and $\mu=1/2$),
considering both the $H^1$-gradient flow
and the $L^2$-gradient flow. 
The modified implicit Euler method is a mixture of the standard Euler and midpoint methods, for which the variational 
formulation to be solved at each iteration is the one of the standard Euler method. 
However, like in the midpoint method,
for $n \geqslant 2$, the orthogonality constraint is considered with respect to extrapolated value ${\widehat u}_h^{n-1/2}$, instead of $u_h^{n-1}$.

\begin{table}[ht]
\begin{center}
\sisetup{output-exponent-marker=\ensuremath{\mathrm{e}}}
\tiny\setlength{\tabcolsep}{3pt}
\begin{tabular}{ l r c c c c c c c c}
\hline
\multicolumn{10}{c}{Modified implicit Euler method ($H^1$-gradient flow)}\\ \hline
$\tau$ & $N_\stop$
& $\delta_\infty[u_h^{N_\stop}]$ & $\mathrm{eoc}_\infty$
& $\delta_\uni[u_h^{N_\stop}]$ & $\mathrm{eoc}_\uni$
& $\delta_\ener[u_h^{N_\stop}]$
& $A^2$ & $B^2$ & $C^2$\\ \hline
$2^{-4}$ & 271 & \num{4.829655e-04} & --- & \num{2.303562e-04} & --- & \num{5.055126e-04} & \num{3.706005e-03} & \num{1.134456e-01} & \num{7.036515e-15} \\
$2^{-5}$ & 532 & \num{1.259041e-04} & \num{1.939594871316770} & \num{6.016462e-05} & \num{1.936879168422335} & \num{7.814522e-05} & \num{1.966096e-03} & \num{1.204253e-01} & \num{7.795351e-15} \\
$2^{-6}$ & 1054 & \num{3.216196e-05} & \num{1.968898030023679} & \num{1.538279e-05} & \num{1.967598164010376} & \num{2.310611e-04} & \num{1.012302e-03} & \num{1.241592e-01} & \num{8.215564e-15} \\
$2^{-7}$ & 2099 & \num{8.129044e-06} & \num{1.984197727164654} & \num{3.889742e-06} & \num{1.983570818845384} & \num{2.702611e-04} & \num{5.135808e-04} & \num{1.260916e-01} & \num{8.318066e-15} \\
$2^{-8}$ & 4187 & \num{2.043515e-06} & \num{1.992032863801813} & \num{9.780286e-07} & \num{1.991725907951079} & \num{2.801892e-04} & \num{2.586620e-04} & \num{1.270748e-01} & \num{8.490180e-15} \\
$2^{-9}$ & 8365 & \num{5.122974e-07} & \num{1.995999358211607} & \num{2.452119e-07} & \num{1.995847659869674} & \num{2.826877e-04} & \num{1.298006e-04} & \num{1.275707e-01} & \num{8.516978e-15} \\
$2^{-10}$ & 16721 & \num{1.282524e-07} & \num{1.997995746496659} & \num{6.139142e-08} & \num{1.997920048670634} & \num{2.833144e-04} & \num{6.501795e-05} & \num{1.278197e-01} & \num{8.530437e-15} \\
\hline
\end{tabular}

\bigskip

\begin{tabular}{ l r c c c c c c c c}
\hline
\multicolumn{10}{c}{Modified implicit Euler method ($L^2$-gradient flow)}\\ \hline
$\tau$ & $N_\stop$
& $\delta_\infty[u_h^{N_\stop}]$ & $\mathrm{eoc}_\infty$
& $\delta_\uni[u_h^{N_\stop}]$ & $\mathrm{eoc}_\uni$
& $\delta_\ener[u_h^{N_\stop}]$
& $A^2$ & $B^2$ & $C^2$\\ \hline
$2^{-10}$ & 347 & \num{5.461846e-03} & --- & \num{2.461069e-03} & --- & \num{8.764122e-03} & \num{6.532986e+02} & \num{4.441482e+03} & \num{9.185912e-13} \\
$2^{-11}$ & 700 & \num{1.995531e-03} & \num{1.452615946155298} & \num{7.757144e-04} & \num{1.665687618098933} & \num{2.423257e-03} & \num{5.499176e+02} & \num{5.857205e+03} & \num{9.657152e-13} \\
$2^{-12}$ & 1390 & \num{7.109472e-04} & \num{1.488958366743549} & \num{2.266415e-04} & \num{1.775113528244151} & \num{4.891714e-04} & \num{4.105570e+02} & \num{7.058192e+03} & \num{9.945471e-13} \\
$2^{-13}$ & 2772 & \num{2.380126e-04} & \num{1.578704469535225} & \num{6.258296e-05} & \num{1.856570254397675} & \num{7.204605e-05} & \num{2.755945e+02} & \num{7.954286e+03} & \num{9.894180e-13} \\
$2^{-14}$ & 5534 & \num{7.495484e-05} & \num{1.666944405177144} & \num{1.661586e-05} & \num{1.913208930004023} & \num{2.275318e-04} & \num{1.698855e+02} & \num{8.551995e+03} & \num{9.973162e-13} \\
$2^{-15}$ & 11059 & \num{2.296291e-05} & \num{1.706716158481226} & \num{4.301938e-06} & \num{1.949502327907603} & \num{2.690354e-04} & \num{9.854490e+01} & \num{8.917550e+03} & \num{9.986803e-13} \\
$2^{-16}$ & 22109 & \num{6.911846e-06} & \num{1.732162501836424} & \num{1.097130e-06} & \num{1.971252250425153} & \num{2.798283e-04} & \num{5.495828e+01} & \num{9.128111e+03} & \num{9.993561e-13} \\
\hline
\end{tabular}
\end{center}
\footnotesize
\caption{Performance of the modified implicit Euler method ($H^1$- and $L^2$-gradient flows).}
 \label{tab:invstereo_modified}
\end{table}

The results of the simulations are displayed in \cref{tab:invstereo_modified}.
Comparing the performance of this method for both gradient flow metrics
with the previous three approaches, 
we can see that the modified implicit Euler method
behaves like the midpoint method.

\subsubsection{Performance in the presence of singularities} \label{sec:singular}

It is well known that the heat flow of harmonic maps can develop singularities,
even if the initial value is smooth.
In this subsection,
we test the performance of the
projection-free linearly implicit midpoint method
in such a situation.

\begin{figure}[ht]
\centering
\begin{subfigure}{0.25\textwidth}
	\centering
	\includegraphics[width=\textwidth]{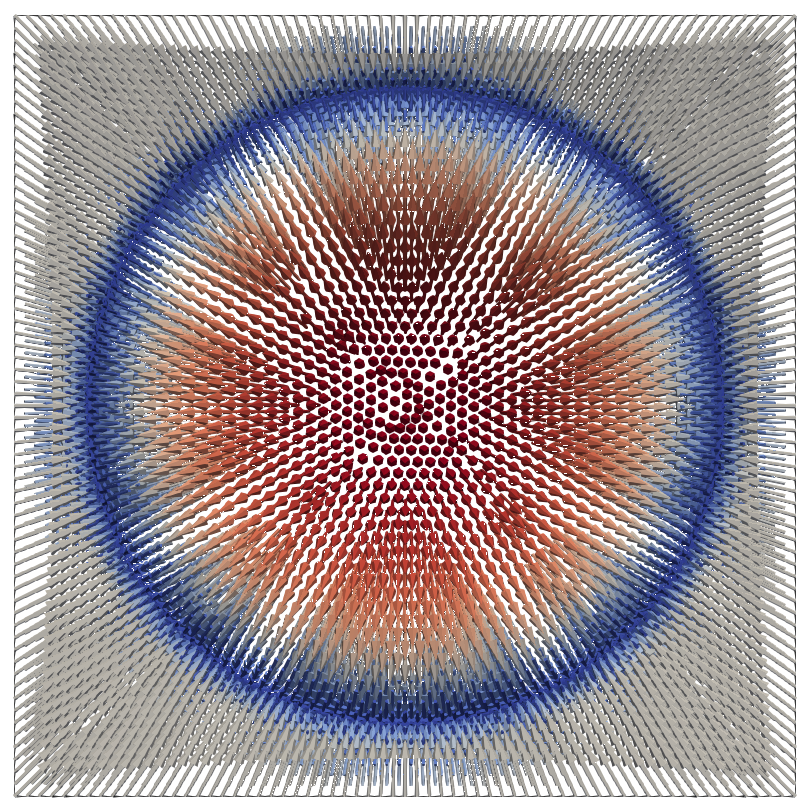}
	\caption{Initial value.}
\end{subfigure}
\quad
\begin{subfigure}{0.25\textwidth}
	\centering
	\includegraphics[width=\textwidth]{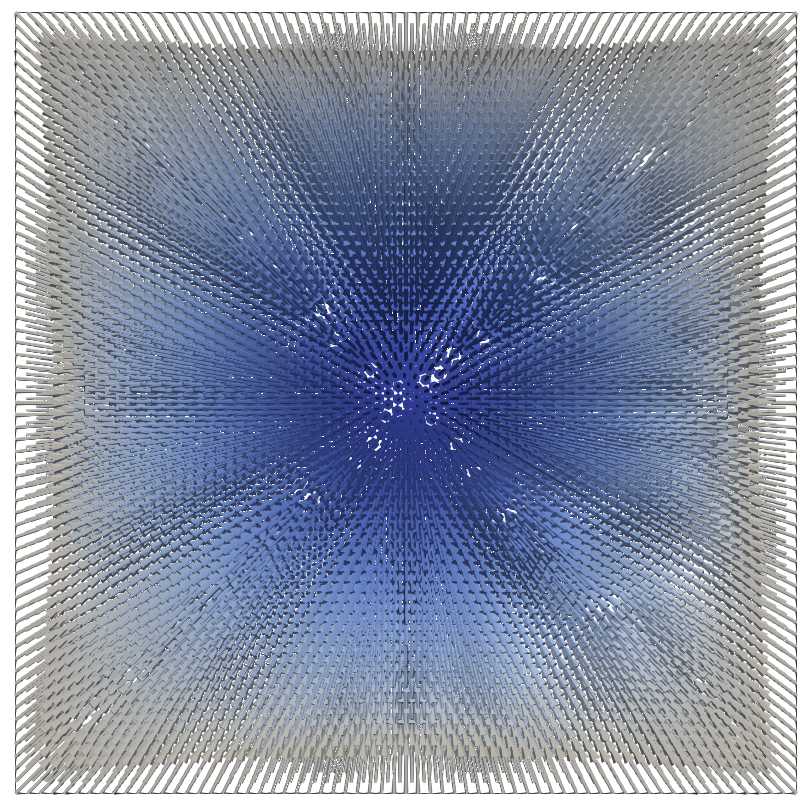}
	\caption{Final value.}
\end{subfigure}
\vspace*{-3mm}
\caption{Pictures of the initial value (left) and the final value (right).
The picture of the final value refers to the results
obtained for $\tau = 2^{-14}$.
The color scale refers to the third component of
the field, which attains values between -1 (blue) and 1 (red).}
\label{fig:exp_singular}
\end{figure}

The problem specifications are the same as in the previous experiments,
except for the initial value and the stopping criterion.
We consider the initial value~\cite{CDY}
\begin{equation} \label{eq:singular_ic}
u^0(x) = |x|^{-1} \big( x \sin\phi(2|x|) , |x| \cos\phi(2|x|) \big)^\top
\end{equation}
with
$\phi(s) = (3\pi/2) \min\{ s^2, 1\}$.
We refer to~\cite[Section~6.2]{Bartels-2016} for numerical results for the harmonic map heat flow
with this initial value
obtained with the linearly implicit Euler method~\eqref{eq:iEa}.
Restricting ourselves to
the $L^2$-gradient flow
and the midpoint method,
we simulate the gradient flow dynamics in the fixed time interval $[0,T]$ with $T=1$
using the values $\tau = 2^{-m}$ for $m=11,\dotsc,16$ for the step size.
For pictures of the initial value $u_h^0$ and the final value $u_h^N$ (with $N = 2^m$)
computed by the algorithm for the case $m=14$, we refer to \cref{fig:exp_singular}.

\begin{table}[ht]
\begin{center}
\sisetup{output-exponent-marker=\ensuremath{\mathrm{e}}}
\tiny\setlength{\tabcolsep}{3pt}
\begin{tabular}{ l r c c c c c c c}
\hline
\multicolumn{9}{c}{midpoint method}\\ \hline
$\tau$ & N
& $\delta_\infty[u_h^{N}]$ & $\mathrm{eoc}_\infty$
& $\delta_\uni[u_h^{N}]$ & $\mathrm{eoc}_\uni$
& $A^2$ & $B^2$ & $C^2$\\ \hline
$2^{-11}$ & 2048 & \num{6.695872e-01} & --- & \num{3.213439e-03} & --- & \num{1.446455e+04} & \num{7.644249e+03} & \num{1.924170e-10} \\
$2^{-12}$ & 4096 & \num{2.807488e-01} & \num{1.253992093073576} & \num{1.062308e-03} & \num{1.596915972062555} & \num{1.670666e+04} & \num{1.156007e+04} & \num{9.759367e-13} \\
$2^{-13}$ & 8192 & \num{6.973685e-02} & \num{2.009286751707688} & \num{3.036247e-04} & \num{1.806841049709376} & \num{1.479558e+04} & \num{1.683711e+04} & \num{8.836271e-13} \\
$2^{-14}$ & 16384 & \num{1.058308e-02} & \num{2.720161642383164} & \num{8.545439e-05} & \num{1.829062645716390} & \num{1.056962e+04} & \num{2.370806e+04} & \num{8.562739e-13} \\
$2^{-15}$ & 32768 & \num{1.310741e-03} & \num{3.013305011988832} & \num{2.693660e-05} & \num{1.665586846576346} & \num{7.706581e+03} & \num{3.220000e+04} & \num{8.497994e-13} \\
$2^{-16}$ & 65536 & \num{2.705663e-04} & \num{2.276328577054749} & \num{8.859118e-06} & \num{1.604332783414990} & \num{6.026424e+03} & \num{4.182021e+04} & \num{8.483792e-13} \\
\hline
\end{tabular}
\end{center}
\footnotesize
\caption{Performance of the
midpoint method for the harmonic map heat flow
in the presence of singularities.}
\label{tab:singular}
\end{table}

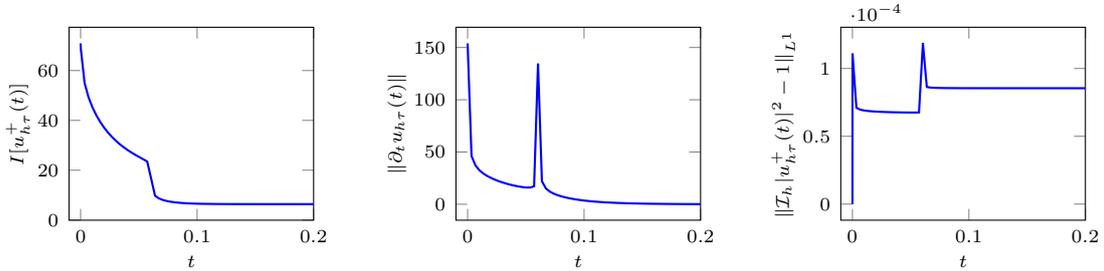
\begin{figure}[ht]
\begin{subfigure}{0.32\textwidth}
\centering
\begin{tikzpicture}
\pgfplotstableread{pics/hmhf_singular_energies.dat}{\energy}
\begin{axis}
[
width = \textwidth,
xlabel = {\tiny $t$},
ylabel = {\tiny $I[u_{h\tau}^+(t)]$},
xmin = -0.01,
xmax = 0.2,
xtick={0, 0.1, 0.2},
]
\addplot[blue, thick] table[x=t, y=energy]{\energy};
\end{axis}
\end{tikzpicture}
\end{subfigure}
\
\begin{subfigure}{0.32\textwidth}
\centering
\begin{tikzpicture}
\pgfplotstableread{pics/hmhf_singular_update_norms.dat}{\norms}
\begin{axis}
[
width = \textwidth,
xlabel = {\tiny $t$},
ylabel = {\tiny $\lVert \partial_t u_{h\tau}(t) \rVert$},
xmin = -0.01,
xmax = 0.2,
xtick={0, 0.1, 0.2},
]
\addplot[blue, thick] table[x=t, y=update_norm]{\norms};
\end{axis}
\end{tikzpicture}
\end{subfigure}
\
\begin{subfigure}{0.32\textwidth}
\centering
\begin{tikzpicture}
\pgfplotstableread{pics/hmhf_singular_L1error.dat}{\error}
\begin{axis}
[
width = \textwidth,
xlabel = {\tiny $t$},
ylabel = {\tiny $\lVert \mathcal{I}_h \vert u_{h\tau}^+(t) \vert^2 - 1 \rVert_{L^1}$},
xmin = -0.01,
xmax = 0.2,
xtick={0, 0.1, 0.2},
]
\addplot[blue, thick] table[x=t, y=L1error]{\error};
\end{axis}
\end{tikzpicture}
\end{subfigure}
\caption{Evolution in $[0,1/5]$ of the energy (left),
the $L^2$-norm of the update (middle),
and the constraint violation $L^1$-error (right)
for the approximations generated
by the midpoint method 
($\tau = 2^{-14}$).}
\label{fig:plots_singular}
\end{figure}

Looking at the results in \cref{tab:singular},
we observe the convergence of the constraint violation errors, but the rate is only superlinear.
In particular, the convergence of $\delta_\uni[u_h^{N}]$ is not of second order,
which is in agreement with the unboundedness of $B^2$,
for which we observe a moderate grow.
In \cref{fig:plots_singular}, for $\tau = 2^{-14}$,
we show the evolution in $[0,1/5]$ of the energy $I[u_{h\tau}^+(t)]$,
of the $L^2$-norm of $\lVert \partial_t u_{h\tau}(t) \rVert$,
and of the constraint violation error $\lVert \mathcal{I}_h \vert u_{h\tau}^+(t) \vert^2 - 1 \rVert_{L^1}$
(note that we omit the results for $t \in (1/5,1]$ because the curves are nearly constant,
as the evolution has already reached the stationary state).
Here, $u_{h\tau}(t)$ (resp., $u_{h\tau}^+(t)$)
denotes the globally continuous and piecewise affine interpolant
(resp., the forward piecewise constant interpolant)
of the sequence of snapshots $(u_h^n)_{n \geqslant 0}$.
Looking at the evolution of the energy,
we see an abrupt decay at $t \approx 0.06$,
which is when the singularity disappears.
In this phase, the dynamics is faster, as the spike in the plot of $\lVert \partial_t u_{h\tau}(t) \rVert$ reveals,
which leads to a local growth of $\lVert \mathcal{I}_h \vert u_{h\tau}^+(t) \vert^2 - 1 \rVert_{L^1}$.

\subsection{Variable step size}

In this section,
we present two numerical experiments to showcase the performance of the midpoint method
with variable step size.

\subsubsection{Prescribed variable step size} \label{sec:variable1}

Consider the projection-free linearly implicit midpoint method with variable step size $\tau_n$,
i.e., the iteration defined by~\eqref{eq:CNa-vart} with $\theta = \mu = 1/2$ for $n \geqslant 2$
(initialized by one step of the implicit Euler method with step size $\tau_1$ for $n=1$).
From the analysis of \cref{Se:3},
we obtain that the iterates generated by the scheme satisfy
the discrete energy law
\begin{equation} \label{eq:energy_cn_var}
\frac{1}{2}\|\nabla u^m\|^2+ \sum_{n=1}^m \tau _n\|d_t u^n\|_\star^2
+ \frac{\tau _1^2}2  \| \nabla d_t u^1\|^2
=\frac12 \|\nabla u^0\|^2
\end{equation}
and the upper bound of the constraint violation error
\begin{equation} \label{eq:con_viol_cn_var}
\begin{split}
 \| |u^m|^2 -1\|_{L^1} 
& \leqslant
\frac{1}{2} \left( \tau_1^2\|d_t u^1\|^2
+ \tau_m^2\|d_t u^m\|^2
+ \sum_{n=2}^m \tau_n^4 \|d_t^2 u^n\|^2
\right) \\
& \quad
+ \frac{1}{2} \sum_{n=1}^{m-1} \tau_n^2|1-s_{n+1}^2|\|d_t u^n\|^2;
\end{split}
\end{equation}
see~\eqref{eq:CV-energ-stab-variable} and~\eqref{eq:CN-constr-viol-a-vart},
respectively.
We now aim to exploit the flexibility given by the variable step size
to accelerate the convergence to stationary configurations.

First of all, motivated by the heuristic fact that the step size should increase as the iteration evolves toward convergence,
we aim to obtain a nondecreasing sequence of step sizes,
i.e., we impose that $\tau_n \leqslant \tau_{n+1}$ or, equivalently, $s_{n+1} \geqslant 1$.

Consider now the right-hand side of~\eqref{eq:con_viol_cn_var},
which consists of two terms.
Up to the generalization to variable step sizes,
the first term is the same term that is present
in the corresponding estimate for the method with constant step size.
This term decays quadratically with respect to the step size if a discrete regularity condition is satisfied.
The second term arises when the step size used in the method is not constant.
Motivated by the similarity of this term to the second term on the left-hand side of~\eqref{eq:energy_cn_var},
and observing that the latter is uniformly bounded,
we aim to define $\tau_{n+1}$ in such a way that 
the step size powers in the two terms on the right-hand side of~\eqref{eq:con_viol_cn_var} are balanced,
which is true if $|1-s_{n+1}^2| = s_{n+1}^2 -1 = c \tau_n$ for some $c>0$.
Manipulating this identity, we obtain the step size update
\begin{equation} \label{eq:step_size_adjustment}
\tau_{n+1} = \tau_n \sqrt{1 + c\tau_n}.
\end{equation}
To numerically validate this choice, we repeat the experiments of \cref{sec:stereo,sec:stereo2}
for the linearly implicit midpoint method with variable step size.
We consider the initial step size $\tau_1 = 2^{-m}$ for $m=6,\dotsc,10$
for the $H^1$-gradient flow
(resp., $m=10,\dotsc,16$ for the $L^2$-gradient flow)
and update it during the iteration using the aforementioned formula with $c=1$, i.e., $\tau_{n+1} = \tau_n \sqrt{1 + \tau_n}$.

\begin{table}[ht]
\begin{center}
\sisetup{output-exponent-marker=\ensuremath{\mathrm{e}}}
\tiny\setlength{\tabcolsep}{3pt}
\begin{tabular}{ l r c c c c c c c c}
\hline
\multicolumn{9}{c}{midpoint method with $\tau_{n+1} = \tau_n \sqrt{1 + \tau_n}$ ($H^1$-gradient flow)}\\ \hline
$\tau_1$ & $N_\stop$ & $\tau_{N_\stop}$ 
& $\delta_\infty[u_h^{N_\stop}]$
& $\delta_\uni[u_h^{N_\stop}]$
& $\delta_\ener[u_h^{N_\stop}]$
& $A^2$ & $B^2$ & $C^2$ \\ \hline
$2^{-6}$   & 139    & \num{183.4840}       & \num{1.774948e-05} & \num{5.604644e-06} & \num{3.009548e-04} & \num{1.434655e-03} & \num{1.241592e-01} & \num{1.247063e-18} \\
$2^{-7}$   & 266    & \num{9.4134}           & \num{4.709664e-06} & \num{1.487952e-06} & \num{2.882204e-04} & \num{7.249986e-04} & \num{1.260916e-01} & \num{2.318879e-16} \\
$2^{-8}$   & 522    & \num{4.0424}           & \num{1.215676e-06} & \num{3.842935e-07} & \num{2.847453e-04} & \num{3.644335e-04} & \num{1.270748e-01} & \num{3.591280e-16} \\
$2^{-9}$   & 1035  & \num{3.9375}           & \num{3.090855e-07} & \num{9.773934e-08} & \num{2.838355e-04} & \num{1.827027e-04} & \num{1.275707e-01} & \num{2.815904e-17} \\
$2^{-10}$ & 2059  & \num{2.1582}           & \num{7.795060e-08} & \num{2.465423e-08} & \num{2.836025e-04} & \num{9.147314e-05} & \num{1.278197e-01} & \num{1.969074e-16} \\
\hline
\end{tabular}

\bigskip

\begin{tabular}{ l r c c c c c c c c}
\hline
\multicolumn{9}{c}{midpoint method with $\tau_{n+1} = \tau_n \sqrt{1 + \tau_n}$ ($L^2$-gradient flow)}\\ \hline
$\tau_1$ & $N_\stop$ & $\tau_{N_\stop}$ 
& $\delta_\infty[u_h^{N_\stop}]$
& $\delta_\uni[u_h^{N_\stop}]$
& $\delta_\ener[u_h^{N_\stop}]$
& $A^2$ & $B^2$ & $C^2$ \\ \hline
$2^{-10}$ & 319     & \num{0.0012}           & \num{5.503357e-03} & \num{2.484544e-03} & \num{8.864545e-03} & \num{7.148302e+02} & \num{4.441482e+03} & \num{9.574041e-13} \\
$2^{-11}$ & 637     & \num{5.7799e-04}    & \num{2.010740e-03} & \num{7.780006e-04} & \num{2.432801e-03} & \num{5.830711e+02} & \num{5.857205e+03} & \num{9.616386e-13} \\
$2^{-12}$ & 1272   & \num{2.8897e-04}    & \num{7.101913e-04} & \num{2.266544e-04} & \num{4.893734e-04} & \num{4.260996e+02} & \num{7.058192e+03} & \num{9.925515e-13} \\
$2^{-13}$ & 2543   & \num{1.4449e-04}    & \num{2.370374e-04} & \num{6.251212e-05} & \num{7.227205e-05} & \num{2.819149e+02} & \num{7.954286e+03} & \num{9.970764e-13} \\
$2^{-14}$ & 5086   & \num{7.2246e-05}    & \num{7.464864e-05} & \num{1.658983e-05} & \num{2.276187e-04} & \num{1.721438e+02} & \num{8.551995e+03} & \num{9.933730e-13} \\
$2^{-15}$ & 10170 & \num{3.6122e-05}    & \num{2.290065e-05} & \num{4.294628e-06} & \num{2.690601e-04} & \num{9.927536e+01} & \num{8.917550e+03} & \num{9.977045e-13} \\
$2^{-16}$ & 20339 & \num{1.8061e-05}    & \num{6.902191e-06} & \num{1.095233e-06} & \num{2.798347e-04} & \num{5.518161e+01} & \num{9.128111e+03} & \num{9.983312e-13} \\
\hline
\end{tabular}
\end{center}
\footnotesize
\caption{Midpoint method with prescribed variable step size.}
\label{tab:invstereo_variable_step_size}
\end{table}

The results of the simulation are displayed in \cref{tab:invstereo_variable_step_size},
where we collect the same outputs considered before, plus the value $\tau_{N_\stop}$ of the step size at the final iteration.

Looking at the results obtained for the $H^1$-gradient flow and comparing them with those obtained for constant step size
(see the third table in \cref{tab:invstereo_H1}),
we see
that the quality of the results improves significantly
and the method with variable step size clearly overcomes the one using a constant step size.
Indeed,
for the same computational cost, i.e., if the number of iterations is approximately the same,
the constraint violation error observed for the method with variable step size is much smaller:
e.g.,
for about $2000$ iterations (obtained for $\tau= 2^{-7}$ in the case of constant step size),
the constraint violation error measured in the $L^1$-norm is of the order of $10^{-6}$;
for the same number of iterations, the method with variable step size yields an error of the order of $10^{-8}$.
Looking at the other quantities,
we see that the lack of convergence of the energy values is the same for both variants of the method
and that the discrete regularity condition seems to hold.

For the $L^2$-gradient flow
(compare the second table in \cref{tab:invstereo_variable_step_size}
with the third table in \cref{tab:invstereo_L2}),
we see that the difference in the performance
of the midpoint method in the case of constant and variable step sizes is negligible.
We conclude that the prescribed step size update rule is very effective for energy minimization in the case of the $H^1$-gradient flow,
but not very impactful for the $L^2$-gradient flow.

\subsubsection{Adaptive step size control} \label{sec:variable2}

In this subsection,
we endow the linearly implicit midpoint method
with a simple adaptive mechanism, which allows to adjust the step size
without assuming any knowledge on the specific problem data.

Specifically, for all $n \geqslant 2$,
after the computation of the update $d_t u_h^n$ via~\eqref{eq:CNa-vart}
and the definition of the new approximation $u_h^n = u_h^{n-1} + \tau_n d_t u_h^n$,
we adjust the step size $\tau_{n+1}$ for the next iteration according to the following rule:
Given $0 < \tau_{\min} < \tau_{\max}$,
if $\lVert d_t u_h^n \rVert > \lVert d_t u_h^{n-1} \rVert$, then we reduce the step size
for the next iteration as
$\tau_{n+1} = \max\big\{ \tau_{\min} , \tau_n \sqrt{1 - \tau_n/\tau_{\max}} \big\}$;
otherwise,
if $\lVert d_t u_h^n \rVert \leqslant \lVert d_t u_h^{n-1} \rVert$, then we enlarge the step size
as
$\tau_{n+1} = \min\big\{ \tau_{\max} , \tau_n \sqrt{1 + \tau_n/\tau_{\max}} \big\}$.
The approach reduces the step size whenever the gradient flow dynamics accelerates,
conversely the step size increases if the dynamics slows down.
The specific formula for the adjustment comes from the discussion in \cref{sec:variable1}
(cf.\ \eqref{eq:step_size_adjustment}),
where we choose $c = 1/\tau_{\max}$ to guarantee that the step size is always positive.

We test the approach using the setups of \cref{sec:singular}
(harmonic map heat flow with singular solutions).
We consider the initial step size $\tau_1 = 2^{-m}$ for $m=11,\dotsc,16$.
Moreover, we set $\tau_{\min} = 2^{-18}$ and $\tau_{\max} = 1$.

\begin{table}[ht]
\begin{center}
\sisetup{output-exponent-marker=\ensuremath{\mathrm{e}}}
\tiny\setlength{\tabcolsep}{3pt}
\begin{tabular}{ l r c c c c c c}
\hline
\multicolumn{7}{c}{adaptive midpoint method}\\ \hline
$\tau_0$ & $N$
& $\delta_\infty[u_h^{N}]$
& $\delta_\uni[u_h^{N}]$
& $A^2$ & $B^2$ & $C^2$ \\ \hline
$2^{-11}$ & 1721   & \num{7.052877e-01} & \num{3.477822e-03} & \num{1.434814e+04} & \num{7.644249e+03} & \num{7.488691e-05} \\
$2^{-12}$ & 3250   & \num{2.855156e-01} & \num{1.074735e-03} & \num{1.665691e+04} & \num{1.156007e+04} & \num{9.791702e-13} \\
$2^{-13}$ & 6497   & \num{7.343041e-02} & \num{3.103971e-04} & \num{1.494351e+04} & \num{1.683711e+04} & \num{8.837285e-13} \\
$2^{-14}$ & 12991 & \num{1.130145e-02} & \num{8.661054e-05} & \num{1.071492e+04} & \num{2.370806e+04} & \num{8.570663e-13} \\
$2^{-15}$ & 25981 & \num{1.402676e-03} & \num{2.707185e-05} & \num{7.785717e+03} & \num{3.220000e+04} & \num{8.495828e-13} \\
$2^{-16}$ & 51960 & \num{2.705590e-04} & \num{8.872591e-06} & \num{6.065570e+03} & \num{4.182021e+04} & \num{8.482394e-13} \\
\hline
\end{tabular}
\end{center}
\footnotesize
\caption{Performance of the midpoint method
with adaptively adjusted step size
for the harmonic map heat flow
in the presence of singularities.}
\label{tab:adaptive_singular}
\end{table}

\begin{figure}[ht]
\centering
\begin{subfigure}{0.32\textwidth}
\centering
\begin{tikzpicture}
\pgfplotstableread{pics/hmhf_singular_adaptive_energies.dat}{\energy}
\begin{axis}
[
width = \textwidth,
xlabel = {\tiny $t$},
ylabel = {\tiny $I[u_{h\tau}^+(t)]$},
xmin = -0.01,
xmax = 0.2,
xtick={0, 0.1, 0.2},
]
\addplot[blue, thick] table[x=t, y=energy]{\energy};
\end{axis}
\end{tikzpicture}
\end{subfigure}
\quad
\begin{subfigure}{0.32\textwidth}
\centering
\begin{tikzpicture}
\pgfplotstableread{pics/hmhf_singular_adaptive_update_norms.dat}{\norms}
\begin{axis}
[
width = \textwidth,
xlabel = {\tiny $t$},
ylabel = {\tiny $\lVert \partial_t u_{h\tau}(t) \rVert$},
xmin = -0.01,
xmax = 0.2,
xtick={0, 0.1, 0.2},
]
\addplot[blue, thick] table[x=t, y=update_norm]{\norms};
\end{axis}
\end{tikzpicture}
\end{subfigure}\\
\bigskip
\begin{subfigure}{0.32\textwidth}
\centering
\begin{tikzpicture}
\pgfplotstableread{pics/hmhf_singular_adaptive_L1error.dat}{\error}
\begin{axis}
[
width = \textwidth,
xlabel = {\tiny $t$},
ylabel = {\tiny $\lVert \mathcal{I}_h \vert u_{h\tau}^+(t) \vert^2 - 1 \rVert_{L^1}$},
xmin = -0.01,
xmax = 0.2,
xtick={0, 0.1, 0.2},
]
\addplot[blue, thick] table[x=t, y=L1error]{\error};
\end{axis}
\end{tikzpicture}
\end{subfigure}
\quad
\begin{subfigure}{0.32\textwidth}
\centering
\begin{tikzpicture}
\pgfplotstableread{pics/hmhf_singular_adaptive_tau.dat}{\stepsize}
\begin{axis}
[
width = \textwidth,
xlabel = {\tiny $t$},
ylabel = {\tiny $\tau$},
xmin = -0.01,
xmax = 0.2,
xtick={0, 0.1, 0.2},
]
\addplot[blue, thick] table[x=t, y=tau]{\stepsize};
\end{axis}
\end{tikzpicture}
\end{subfigure}
\caption{Evolution in $[0,1/5]$ of the energy (top-left),
the $L^2$-norm of the update (top-right),
the constraint violation $L^1$-error (bottom-left),
and the step size (bottom-right)
for the approximations generated
by the midpoint method with adaptively adjusted step size
($\tau_1 = 2^{-14}$).}
\label{fig:plots_singular_adaptive}
\end{figure}
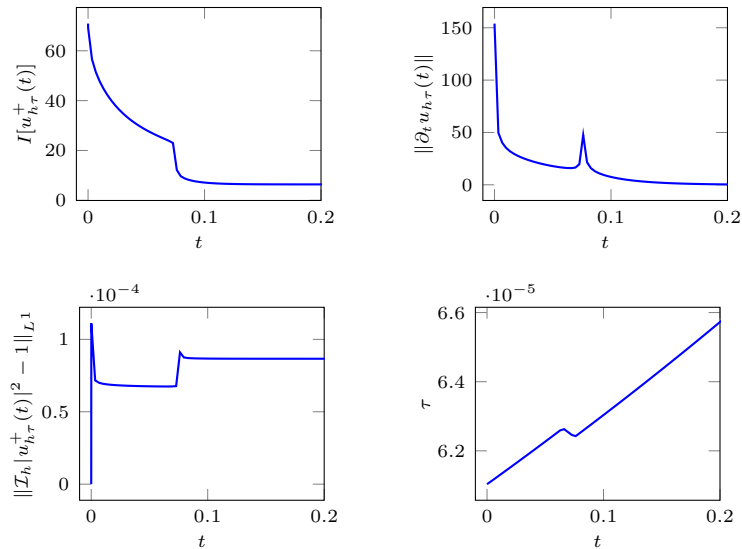

Comparing the results in \cref{tab:adaptive_singular} and \cref{fig:plots_singular_adaptive} with those obtained for constant step size (see \cref{tab:singular}),
we see that the number of iterations $N$ needed by the adaptive method to simulate the gradient flow dynamics in the interval of interest
(a measure of the overall computational cost of the simulation)
is reduced by 20\% despite achieving approximately the same accuracy in the realization of the unit-length constraint.
In \cref{fig:plots_singular_adaptive} (bottom-right),
we see that the reduction of the computational cost is obtained by increasing the step size throughout the entire evolution,
except for $t \in (0.06,0.08)$,
where the faster dynamics (cf.\ the plots in \cref{fig:plots_singular_adaptive} (top)) requires a more accurate time discretization.

\bibliographystyle{habbrv}
\bibliography{ref}

\end{document}